\documentclass[10pt]{amsart}
\usepackage{amsmath,amsthm,amssymb,amsfonts, eucal, amscd, mathbbol, mathrsfs, mathabx}
\usepackage[shortlabels]{enumitem}
\usepackage{cite}
\usepackage{setspace}
\usepackage[all,cmtip]{xy}{
\usepackage{graphicx}
\usepackage{subfig}
\usepackage{fancyhdr}
\usepackage{latexsym}
\usepackage{fncylab}
\usepackage{epic}
\usepackage{ifthen}
\usepackage[bookmarks,bookmarksnumbered, plainpages=false, pdfpagelabels]{hyperref}
\usepackage{xcolor}
\hypersetup{
  colorlinks   = true, 
  urlcolor     = green, 
  linkcolor    = blue, 
  citecolor   = red 
}

\usepackage{rotating}
\usepackage{scalefnt}
\usepackage{enumitem}
\setlist{nolistsep}

\parskip = 0.2in
\parindent = 0.0in
\topmargin = 0.0in

\oddsidemargin = 0.0in
\evensidemargin = 0.0in
\textwidth = 6.0in

\newtheorem{thm}{Theorem}[section]
\newtheorem*{thm*}{Main Theorem}
\newtheorem*{thm**}{Theorem}
\newtheorem{cor}[thm]{Corollary}
\newtheorem{lem}[thm]{Lemma}
\newtheorem{prop}[thm]{Proposition}
\theoremstyle{definition}
\newtheorem{defn}[thm]{Definition}
\theoremstyle{definition}

\theoremstyle{definition}

\theoremstyle{definition}

\theoremstyle{definition}
\newtheorem{examples}[thm]{Examples}
\theoremstyle{definition}

\theoremstyle{definition}

\numberwithin{thm}{subsection}

\newcommand{\R}{\ensuremath{\mathbb{R}}}
\newcommand{\N}{\ensuremath{\mathbb{N}}} 

\newcommand{\Z}{{\Bbb Z}} 


\def\p{\partial}
\def\i{\infty}

\def\fr{\emph{fr}}
 
\def\image{\emph{im}}

\def\d{\delta}
 
\def\e{\epsilon}
\def\k{\kappa}

\def\o{\omega} 
\def\O{\Omega}

\def\H{\mathcal{H}}

\def\Span{\mathfrak{S}}


\def\XXint#1#2#3{{\setbox0=\hbox{$#1{#2#3}{\int}$}
\vcenter{\hbox{$#2#3$}}\kern-.5\wd0}}

\renewcommand{\theenumi}{(\alph{enumi})}

\makeatletter
\renewcommand{\p@enumii}{\theenumi}
\makeatother                                                

\begin{document} 
	\author[J. Harrison \& H. Pugh]{J. Harrison \\Department of Mathematics \\University of California, Berkeley  \\ \\  H. Pugh\\ Mathematics Department \\ Stony Brook University} 
	\title{Spanning via \v{C}ech cohomology}      

\begin{abstract}  
	Plateau's problem is to find a surface with minimal area spanning a given boundary. In 1960, Reifenberg and Adams developed a definition for ``span'' using \v{C}ech homology, and variants of this definition have been used ever sense. However, limitations of \v{C}ech homology resulted in the lack of a natural definition for a boundary consisting of more than one component. The authors avoided this problem in an earlier paper for codimension one surfaces using linking numbers to define spanning sets. In this paper, we show how to use \v{C}ech cohomology to provide a similar definition for all dimensions and codimensions.
\end{abstract}

\maketitle
  
\section*{Notation}
	\label{sec:notation}
	If \( X\subset \R^n \),
	\begin{itemize}
		\item \( \fr\,X \) is the frontier of \( X \);
		\item \( \bar{X} \) is the closure of \( X \);
		\item \( \mathring{X} \) is the interior of \( X \);
		\item \( \mathcal{N}(X,\e) \) is the open epsilon neighborhood of \( X \);
		\item \( \H^m(X) \) is the \( m \)-dimensional (normalized) Hausdorff measure of \( X \);
		\item If the Hausdorff dimension of \( X \) is \( m \), then the \emph{\textbf{core}} of \( X \) is the set \( X^*:= \{p \in X \,|\, \H^m(X \cap \mathcal{N}(p,r)) > 0 \text{ for all } r > 0 \} \).
	\end{itemize}

\section{Cohomological spanning condition}
\label{sub:cohomological}

	Let \( 1\leq m\leq n \) and \( A\subset \R^n \). If \( \mathcal{R} \) is a commutative ring and \( G \) is a \( \mathcal{R} \)-module, let \( H^{m-1}(A)=H^{m-1}(A;G) \) (resp. \( \tilde{H}^{m-1}(A)=\tilde{H}^{m-1}(A;G) \)) denote the \( (m-1) \)-st (resp. \( (m-1) \)-st reduced\footnote{Let us agree for notational purposes that \( \tilde{H}^0(\emptyset)=0 \) and that the inclusion \( \iota(Y,\emptyset) \) of \( \emptyset \) into any set \( Y \) induces the zero homomorphism \( \iota(Y,\emptyset)^*: \tilde{H}^0(Y)\to \tilde{H}^0(\emptyset) \).}) \v{C}ech cohomology group with coefficients in \( G \). If \( X\supset A \), and \( \iota = \iota(X,A) \) denotes the inclusion mapping of \( A \) into \( X \), let \( K^*(X,A) \) denote the complement in \( \tilde{H}^{m-1}(A) \) of the image of \( \iota^*:\tilde{H}^{m-1}(X)\to \tilde{H}^{m-1}(A) \). Call \( K^*(X,A) \) the \emph{\textbf{(algebraic) coboundary\footnote{In the spirit of Reifenberg and Adams's terminology ``algebraic boundary.''} of \( X \) with respect to \( A \).}}

	Let \( L \subset \tilde{H}^{m-1}(A)\setminus \{0\} \). We say that \( X \) is a \emph{\textbf{surface with coboundary \( \supset L \)}} if \( K^*(X,A) \supset L \); in other words, if \( L \) is disjoint from the image of \( \iota^* \).

	For example, if \( L=\emptyset \), then every \( X \supset A \) is a surface with coboundary \( \supset L \). If \( L\neq \emptyset \) and \( X \) is a surface with coboundary \( \supset L \), then \( X \) does not retract onto \( A \). If \( L= \tilde{H}^{m-1}(A)\setminus \{0\} \), then \( X \) is a surface with coboundary \( \supset L \) if and only if \( \iota^* \) is trivial on \( \tilde{H}^{m-1}(X) \).

	If \( A \) is homeomorphic to an \( (m-1) \)-sphere, \( \mathcal{R}=G=\Z \), \( L\simeq \{1,-1\} \) is the set of generators of \( \tilde{H}^{m-1}(A)\simeq \Z \) and \( X\supset A \) is compact with \( \H^m(X)<\i \), then \( X \) is a surface with coboundary \( \supset L \) if and only if \( X \) does not retract onto \( A \). This is due to a theorem of Hopf \cite{hurewicz}.

	More generally, if \( G=\mathcal{R} \) and \( A \) is an \( (m-1) \)-dimensional closed \( \mathcal{R} \)-orientable (topological) manifold, then there is a canonical choice for \( L \), denoted \( L^{\mathcal{R}}=L^\mathcal{R}(A) \): Let \( A_i, i=1,\dots,k \) denote the components of \( A \), and for each \( i \), let \( L_i \) denote the image under the natural linear embedding \( H^{m-1}(A_i)\hookrightarrow H^{m-1}(A)\simeq \oplus_i H^{m-1}(A_i) \) of the \( \mathcal{R} \)-module generators of \( H^{m-1}(A_i)\simeq \mathcal{R} \). If \( m>1 \), let \( L^\mathcal{R}=\cup_i L_i \). If \( m=1 \), define \( L^\mathcal{R} \) to be the projection of \( \cup_i L_i \) onto the reduced cohomology \( \tilde{H}^0(A) \).
	
	A primary reason for considering the set \( L^\mathcal{R} \) is the following: If \( X \) is a compact \( \mathcal{R} \)-orientable manifold with boundary \( A \), then \( X \) is a surface with coboundary \( \supset L^\mathcal{R} \) (Theorem \ref{thm:manifoldspans}.) If \( \mathcal{R}=\Z \), then \( X \) need not be orientable. In fact, if \( X \) is any compact set which can be written as the union of \( A \) and an increasing union of a sequence of compact manifolds with boundary \( X_i \), such that \( \p X_i\cup A=\p B_i \) for a sequence \( \{B_i\} \) of compact manifolds which tend to \( A \) in Hausdorff distance, then \( X \) is a surface with coboundary \( \supset L^\Z \) (Theorem \ref{thm:flatspans}.) When \( n=3, m=2 \), this is the class of surfaces \( \mathcal{G} \) found in \cite{reifenberg}.

	Another feature of \( L^\mathcal{R} \) is the following gluing result: Suppose \( A=A_1\cup\cdots\cup A_k \), where \( A_1,\dots, A_k \) are \( (m-1) \)-dimensional closed \( \mathcal{R} \)-orientable manifolds, and every non-empty intersection of the \( A_i \)'s is also a \( (m-1) \)-dimensional closed manifold. If for each \( i=1,\dots,k \), \( X_i \) is a surface with coboundary \( \supset L^\mathcal{R}(A_i) \), then \( X=\cup_i X_i \) is a surface with coboundary \( \supset L^\mathcal{R}(A) \) (Proposition \ref{prop:unionspans}.)

	If \( A \) is a \( (n-2) \)-dimensional closed oriented manifold, then by Alexander duality \( X \) is a surface with coboundary \( \supset L^\Z \) if and only if \( X \) intersects every embedding \( \gamma: \amalg_{j=1}^l S^1 \to \R^n\setminus A \), \( l\in \N \), such that the linking number \( L(\gamma, A_i) \) with some component \( A_i \) of \( A \) is \( \pm 1 \), and such that \( L(\gamma,A_j)=0 \) for \( j\neq i \).

	More generally, if \( A \) is any compact subset of \( \R^n \), we can view \( A \) as a compact subset of the \( n \)-sphere, the \( 1 \)-point compactification of \( \R^n \). Then by Alexander duality, the choice of \( L \) is equivalent to the choice of a subset \( S \) of \( \tilde{H}_{n-m-1}(S^n\setminus A) \). A compact set \( X \) is a surface with coboundary \( \supset L \) if and only if \( X \), viewed as a subset of \( S^n \), intersects the carrier of every singular chain representing an element \( S \). 

	If \( U\supset A\), let \( \Span(A,U,G,L,m) \) denote the collection of compact surfaces \( X \subset U \) such that \( X \cup A \) is a surface with coboundary \( \supset L \) (w.r.t. \( G \)) and \( \H^m(X) < \i \). 
	\newpage
	\begin{examples} \mbox{}
		\begin{enumerate}
			\item\label{threerings}
			If \( A\subset \R^3 \) is the union of three stacked circles, explicitly \( A=\{(x,y,z)\in\R^3: x^2+y^2=1, z\in\{-1,0,1\} \} \), then the surfaces \( X_1=\{(x,y,z)\in \R^3 : x^2+y^2=1, -1\leq z \leq 0 \}\cup \{(x,y,z)\in \R^3 : x^2+y^2\leq 1, z=1 \}\), \( X^2=\{(x,y,z)\in \R^3 : x^2+y^2=1, 0\leq z \leq 1 \}\cup \{(x,y,z)\in \R^3 : x^2+y^2\leq 1, z=-1 \}\) and \( X_3=\{(x,y,z)\in \R^3 : x^2+y^2=1, -1\leq z \leq 1 \} \) are all surfaces with coboundary \( \supset L^\Z \). One can replace the cylinders with catenoids, and move the circles of \( A \) up or down, in which case any of \( X_1 \), \( X_2 \) or \( X_3 \) could be an area minimizer in \( \Span(A,\R^3,L^\Z,2) \), depending on the distance between the circles of \( A \).

			\item
			If \( A\subset \R^3 \) is a standard \( 2 \)-torus given parametrically by \( x(\theta,\phi)=(R+r\cos\theta)\cos\phi \), \( y(\theta,\phi)=(R+r\cos\theta) \) and \( z(\theta,\phi)=r\sin\theta \), and \( L\subset H^1(A;\Z) \) consists of a single element, the class of the cocycle dual to a longitudinal circle \( \phi=\mathrm{const.} \), then \( X\supset A \) is a surface with coboundary \( \supset L \) if and only if \( X \) contains a longitudinal disk. If one replaces the minor radius \( r \) with a function \( r(\phi) \), then the set \( A\cup D \), where \( D \) is the longitudinal disk at the narrowest part of \( A \), will be an area minimizer in \( \Span(A,\R^3,L,2). \)
		\end{enumerate}

	\end{examples}

	This definition is the natural dual of the definition of a ``surface with boundary \( \supset L \)'' \cite{reifenberg} (see also \cite{almgrenannals}.) Recall \( X \) is a \emph{\textbf{(Reifenberg) surface with (algebraic) boundary \( \supset L \)}} if \( L \) is a subgroup of the kernel of \( \iota_*:H_{m-1}(A)\to H_{m-1}(X) \), where \( H_{m-1} \) denotes \v{C}ech homology with coefficients in some compact abelian group \( G \). Given a choice of \( G \) and \( L \), we call the collection of surfaces with boundary \( \supset L \), a \emph{\textbf{Reifenberg collection.}}  Reversing the variance has a number of advantages:
	\begin{enumerate}
		\item We permit \( G \) to be any \( \mathcal{R} \)-module, not just a compact abelian group;
		\item The collection of non-retracting surfaces in Theorem 2 of \cite{reifenberg} is achieved as a single collection, namely \( \Span(S^{m-1},\R^n,L^\Z,m) \);
		\item The sets \( X_1, X_2, X_3 \) in Example \ref{threerings} above are all surfaces with coboundary \( \supset L^\Z \), but the only Reifenberg collections containing all three correspond to the trivial subgroup \( L=\{0\} \), in which case every set \( X\supset A \) is a surface with boundary \( \supset L \). (See Proposition 5.0.1 in \cite{hpplateau}.) 
		\item There is a canonical choice of \( L \) in the case that \( A \) is an oriented compact manifold, namely the subset \( L^\Z \), and the collection \( \Span(A,\R^n,L^\Z,m) \) is well-behaved and large as described above. In particular, \( \Span(A,\R^3,L^\Z,2) \) contains Reifenberg's class \( \mathcal{G} \), and when \( A \) is a sphere, Reifenberg's class of non-retracting surfaces, \( \mathcal{G}^* \).
	\end{enumerate}
	
	One can replace the appendix of Adams in \cite{reifenberg} with results of \S \ref{sub:reifenberg_appendix}, and this together with the main body of \cite{reifenberg} implies
	
	\begin{thm}
		The minimum Hausdorff spherical measure in \( \Span(A,\R^n,G,L,m) \) is achieved, and if \( X \) is such a minimizer, then \( X^* \) is contained in the convex hull of \( A \), and contains no proper subset in \( \Span(A,\R^n,G,L,m) \).
	\end{thm}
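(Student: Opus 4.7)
The plan is to follow the architecture of Reifenberg's proof in \cite{reifenberg}, substituting the cohomological closure statements of \S\ref{sub:reifenberg_appendix} for Adams' homological appendix. Three stages are needed: compactness of a minimizing sequence, closure of the coboundary condition under Hausdorff limits, and deformation arguments for the structural claims about $X^*$.

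First, I would take a minimizing sequence $\{X_i\}\subset\Span(A,\R^n,G,L,m)$. The $1$-Lipschitz nearest-point retraction $\rho$ onto $\mathrm{conv}(A)$ fixes $A$ pointwise and does not increase $\H^m$; moreover the straight-line homotopy from the identity to $\rho$ restricts to the identity on $A$, so $\rho(X_i)\cup A$ inherits the coboundary $\supset L$ from $X_i\cup A$. Thus one may assume each $X_i\subset\mathrm{conv}(A)$, a fixed compact set, and by Blaschke's selection theorem a subsequence of $\{X_i\cup A\}$ converges in Hausdorff distance to some compact $X'\supset A$. Reifenberg's lower semicontinuity of spherical Hausdorff measure then gives $\H^m(X')\le\liminf \H^m(X_i\cup A)$, which equals the infimum.

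The key step — and the main obstacle — is to show that the coboundary condition is preserved in the limit, i.e.\ that $K^\ast(X',A)\supset L$. This is where the cohomological formulation pays off: for compact sets converging in Hausdorff distance, $\tilde{H}^{m-1}(X')=\varinjlim \tilde{H}^{m-1}(U_\e)$ where $U_\e$ runs over shrinking open neighborhoods that eventually contain every $X_i\cup A$. Naturality of $\iota^\ast$ with respect to these inclusions forces any $\alpha\in L$ avoiding $\iota(X_i\cup A,A)^\ast\tilde{H}^{m-1}(X_i\cup A)$ for every $i$ to avoid $\iota(X',A)^\ast\tilde{H}^{m-1}(X')$ as well. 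This is cleaner than Adams' compact-coefficient homological version because direct limits of $\mathcal{R}$-modules are exact, with no compactness hypothesis on $G$. The precise statement I would quote is the content of \S\ref{sub:reifenberg_appendix}. Setting $X:=X'$ (which already contains $A$), one obtains $X\in\Span(A,\R^n,G,L,m)$ attaining the infimum.

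Finally, the structural claims about $X^*$ follow by minimality plus deformation. If $X^*$ met the complement of $\mathrm{conv}(A)$ at a point $p$, then by definition of the core there would be positive $\H^m$-measure strictly outside the convex hull on every ball around $p$; the projection $\rho$ is strictly area-decreasing on such a piece (it collapses the outward normal direction), contradicting minimality of $X$. For the subset claim, if $Y\subsetneq X^*$ were in $\Span(A,\R^n,G,L,m)$, then $X^*\setminus Y$ would be relatively open and nonempty in $X^*$; picking $p\in X^*\setminus Y$ with $\mathcal{N}(p,r)\cap Y=\emptyset$, the core property gives $\H^m(X\cap\mathcal{N}(p,r))>0$, so $\H^m(Y)<\H^m(X^*)=\H^m(X)$, again contradicting minimality.
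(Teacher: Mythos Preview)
Your overall plan---run Reifenberg's argument in \cite{reifenberg} verbatim, replacing each invocation of Adams' homological appendix by the corresponding cohomological lemma from \S\ref{sub:reifenberg_appendix}---is exactly what the paper does, and your identification of Lemma~\ref{lem:6A}/Definition~\ref{def:competitor} (for the convex-hull retraction), Lemma~\ref{lem:21C} (for Hausdorff-limit closure), and Lemma~\ref{lem:corespans} (for passing to $X^*$) is correct.

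There is, however, a real gap in your sketch. You write ``Reifenberg's lower semicontinuity of spherical Hausdorff measure then gives $\H^m(X')\le\liminf \H^m(X_i\cup A)$,'' as though this were a standalone analytic fact. It is not: Hausdorff measure is badly \emph{upper} semicontinuous but not lower semicontinuous under Hausdorff convergence (finite sets can converge to a segment). In Reifenberg's paper the inequality $\H^m(X')\le\inf$ is the heart of the matter and is obtained by a delicate cut-and-paste argument that \emph{uses the spanning condition itself}: one covers $X'$ by small balls, compares with cones over the frontier slices, and invokes precisely the lemmas you do not mention---the analogues of Lemmas~\ref{lem:11A}, \ref{lem:12A}, \ref{lem:13}, \ref{lem:15A}, \ref{lem:16A}---to certify that the deformed comparison surfaces still lie in $\Span(A,\R^n,G,L,m)$. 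So the ``main obstacle'' is not the Hausdorff-limit closure (Lemma~\ref{lem:21C} is one of the shortest lemmas in \S\ref{sub:reifenberg_appendix}) but rather the measure estimate, and that estimate requires the full battery of cohomological cut-and-paste lemmas, not just the direct-limit statement.

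A minor point on your final paragraph: once you have arranged $X_i\subset\mathrm{conv}(A)$, the Hausdorff limit $X$ already lies in $\mathrm{conv}(A)$, so $X^*\subset\mathrm{conv}(A)$ is immediate and no ``strictly area-decreasing'' argument is needed. Your argument for ``no proper subset'' is fine.
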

	
	The same regularity results of Reifenberg also hold, in that \( X^* \) will be locally Euclidean \( \H^m \) almost everywhere. One can also run the new definition through \cite{almgrenannals} to achieve the minimization of elliptic integrands. 
	
\newpage
	\subsection{Some open questions} 
		\label{ssub:some_open_questions}

		\begin{enumerate}
			\item For what choice of \( L\subset H_m(X) \) is it true that if \( X \) is a surface with boundary \( \supset L \), then \( A \subset \overline{X\setminus A} \)? Same question for coboundary.
			\item On the other hand, if \( A \) is a \( (m-1) \)-dimensional \( \mathcal{R} \)-orientable compact manifold, \( A \subset \overline{X\setminus A} \), and \( X \) is minimal in the sense of Almgren, then does the algebraic boundary of \( X \) project nontrivially onto each copy of \( \mathcal{R} \) in \( H_{m-1}(A)\simeq \oplus \mathcal{R} \)? Same question for coboundary.
			\item If \( A\subset \overline{X\setminus A} \) and \( X \) is Almgren minimal, then does the algebraic boundary of \( X \) project nontrivially onto each copy of \( \mathcal{R} \) as above? Same question for coboundary.
			\item  If \( m=n-1 \) and \( A \) is a \( (m-1) \)-dimensional orientable compact manifold, the condition that \( X \) is a surface with coboundary \( \supset L^\Z \) is slightly relaxed from the definition of ``span'' using linking numbers in (\cite{plateau10} \cite{hpplateau},) since in the linking number definition, \( L^\Z \) need only be disjoint from the image of those cocycles which are Alexander dual to cycles represented by embedded circles, and not sums of such. In this vein, one can modify the definition of surface with coboundary \( \supset L \) so that \( L \) need only be disjoint from those elements of \( H^k(A) \) which extend over \( X \) as cocycles Alexander dual to cycles representable by manifolds of a given topological type. However, this definition seems very difficult to work with. For example, compare Lemma \ref{lem:6A} with Theorem 5.0.6 of \cite{hpplateau}.
			\item If \( X \) is a surface with algebraic boundary \( K \), what is the algebraic coboundary \( K^* \) of \( X \)? Does this duality \( K \mapsto K^* \) depend on \( X \)? Same question with \( K \) and \( K^* \) reversed.
			\item If one replaces sets with pairs, the definition can be repeated with relative cohomology: A pair \( (X,Y)\supset (A,B) \) is a surface with coboundary \( \supset L \) if \( L \) is disjoint from the image of \( \iota^*:H^{m-1}(X,Y)\to H^{m-1}(A,B) \). Is this definition useful for working with surfaces which partially span their boundaries?
 		\end{enumerate}

	\section{Cohomological spanning lemmas}
	\label{sub:reifenberg_appendix}

	We now produce a sequence of lemmas, many of whose statements are dual to those found in the appendix of \cite{reifenberg}. We do not assume sets are compact, unless the assumption is made explicit in the lemma. 
	 
	\begin{lem}\label{lem:1A}
		\( K^*(A,A) = \emptyset \).
	\end{lem}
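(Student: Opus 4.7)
The plan is essentially immediate from unpacking the definition of \( K^*(X,A) \) and the functoriality of \v{C}ech cohomology. First I would observe that when \( X=A \), the inclusion map \( \iota(A,A):A\hookrightarrow A \) is simply the identity \( \mathrm{id}_A \). Since \v{C}ech cohomology is a contravariant functor, \( \iota^*=\mathrm{id}_A^* \) is the identity homomorphism on \( \tilde{H}^{m-1}(A) \), and in particular its image is all of \( \tilde{H}^{m-1}(A) \). Thus the complement of the image in \( \tilde{H}^{m-1}(A) \) is empty, which is precisely the claim \( K^*(A,A)=\emptyset \).

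The only edge case worth checking is \( A=\emptyset \), where the footnote convention comes into play: here \( \iota(\emptyset,\emptyset)^*:\tilde{H}^0(\emptyset)\to\tilde{H}^0(\emptyset) \) is declared to be the zero map rather than the identity. However, since \( \tilde{H}^0(\emptyset)=0 \) by the same convention, the zero map and the identity coincide, its image is all of the trivial group, and the complement is again empty. So the conclusion holds uniformly.

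There is no real obstacle here; the lemma is really just a sanity check that the notation behaves correctly on the diagonal, and it will be used later as a base case or trivial instance when combined with the functoriality statements (e.g.\ the behavior of \( K^* \) under inclusions \( A\subset X\subset Y \)) that presumably follow in \S\ref{sub:reifenberg_appendix}. I would therefore keep the write-up to one or two sentences, perhaps splitting out the \( A=\emptyset \) case only to explicitly invoke the footnote convention.
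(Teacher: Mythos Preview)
Your proof is correct and is essentially identical to the paper's own argument, which simply notes that the identity map on \( \tilde{H}^{m-1}(A) \) is surjective. Your additional handling of the \( A=\emptyset \) edge case is a nice touch but not present in (and not needed for) the paper's one-line proof.
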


	\begin{proof}
		The identity map on \( \tilde{H}^{m-1}(A) \) is surjective.
	\end{proof}

	\begin{lem}\label{lem:2A}
		If \( X \) is contractible and \( A \subset X \), then \( K^*(X,A) = \tilde{H}^{m-1}(A)\setminus \{0\} \).
	\end{lem}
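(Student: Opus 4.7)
The plan is to show directly that $\iota^*:\tilde H^{m-1}(X)\to \tilde H^{m-1}(A)$ is the zero map; the conclusion will then be immediate from the definition of $K^*(X,A)$ as the complement in $\tilde H^{m-1}(A)$ of $\mathrm{im}(\iota^*)$.

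First I would invoke homotopy invariance of (reduced) \v{C}ech cohomology. Since $X$ is contractible, it is homotopy equivalent to a point, and so $\tilde H^{m-1}(X;G)=0$ for every $m\geq 1$. (For $m\geq 2$ this is the vanishing of ordinary \v{C}ech cohomology of a contractible space, while for $m=1$ it amounts to $\tilde H^0(X)=0$, which holds because a contractible space is in particular path-connected.)

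Once $\tilde H^{m-1}(X)=0$ is established, the induced homomorphism $\iota^*:\tilde H^{m-1}(X)\to\tilde H^{m-1}(A)$ must be the zero map, so its image equals $\{0\}\subset\tilde H^{m-1}(A)$. By definition,
\[
K^*(X,A)=\tilde H^{m-1}(A)\setminus\mathrm{im}(\iota^*)=\tilde H^{m-1}(A)\setminus\{0\},
\]
which is the desired equality.

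There is no real obstacle here; the only subtlety worth flagging is the $m=1$ case, where one must be careful to use \emph{reduced} $\tilde H^0$ rather than $H^0$, together with the convention in the footnote about $\tilde H^0(\emptyset)$, so that the argument is uniform in $m$. No further lemmas from later in the paper are needed.
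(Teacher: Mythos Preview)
Your proof is correct and follows exactly the same approach as the paper: by homotopy invariance, a contractible \( X \) has the reduced \v{C}ech cohomology of a point, so \( \iota^* \) is the zero map and \( K^*(X,A)=\tilde{H}^{m-1}(A)\setminus\{0\} \). The paper's version is just a one-line statement of this; your added remarks about the \( m=1 \) case and the footnote convention are fine but not essential.
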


	\begin{proof}
		By homotopy invariance, \( X \) has the reduced cohomology of a point.
	\end{proof}

	\begin{lem}\label{lem:3A}
		Suppose \( X\supset A \) and \( X = \cup_{i=1}^N X_i \) where the \( X_i \) are disjoint, closed, and contractible. If \( m>1 \), then \( K^*(X,A) = H^{m-1}(A)\setminus \{0\} \). 
	\end{lem}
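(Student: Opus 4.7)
The plan is to reduce the problem to Lemma 2A applied componentwise. Specifically, I would show that $\tilde H^{m-1}(X)=0$ (where I use that $m-1\geq 1$), which forces the image of $\iota^*:\tilde H^{m-1}(X)\to \tilde H^{m-1}(A)$ to be $\{0\}$; its complement in $\tilde H^{m-1}(A)$ is then $\tilde H^{m-1}(A)\setminus\{0\}$, which for $m>1$ coincides with $H^{m-1}(A)\setminus\{0\}$, giving the claim.

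The first step is to observe that since there are only finitely many pieces $X_i$, all closed and pairwise disjoint, each $X_i$ is in fact clopen in $X$: its complement $\cup_{j\neq i}X_j$ is a finite union of closed sets, hence closed. Thus $X$ is the topological disjoint union $\coprod_{i=1}^N X_i$. By the additivity (disjoint-union) property of \v{C}ech cohomology,
\[
H^{m-1}(X)\;\simeq\;\bigoplus_{i=1}^N H^{m-1}(X_i).
\]

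The second step is to invoke Lemma \ref{lem:2A} (equivalently, the homotopy invariance argument used in its proof) on each $X_i$: each contractible $X_i$ has the reduced cohomology of a point, so $H^{m-1}(X_i)=0$ for $m-1\geq 1$. Summing gives $H^{m-1}(X)=0$, and since $m-1\geq 1$ reduced and unreduced cohomology agree, $\tilde H^{m-1}(X)=0$. Consequently $\mathrm{Im}(\iota^*)=\{0\}$ and $K^*(X,A)=\tilde H^{m-1}(A)\setminus\{0\}=H^{m-1}(A)\setminus\{0\}$.

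There is no real obstacle — the only thing to be careful about is that the additivity axiom and the homotopy invariance of \v{C}ech cohomology on contractible pieces are the two ingredients, and the hypothesis $m>1$ is exactly what is needed so that (i) reduced and unreduced cohomology agree on the disjoint union (avoiding the usual $\tilde H^0$ glitch for multiple components) and (ii) contractibility kills $H^{m-1}(X_i)$. The role of the finiteness and disjointness of the $X_i$ is precisely to ensure they are clopen and that additivity applies; without finiteness the complementary closed pieces could fail to be closed, and the argument would require a separate Mayer–Vietoris-style justification.
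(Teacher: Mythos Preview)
Your argument is correct and uses the same two ingredients as the paper's proof: additivity of \v{C}ech cohomology over the clopen pieces \( X_i \), together with Lemma \ref{lem:2A} (contractibility) to kill each summand. The paper additionally decomposes \( A \) as \( \cup A_i \) with \( A_i=A\cap X_i \) and writes down the commutative square relating \( \iota(X,A)^* \) to \( \oplus\iota(X_i,A_i)^* \), but your shortcut of simply concluding \( H^{m-1}(X)=0 \) directly (and hence \( \mathrm{Im}\,\iota^*=0 \)) is a harmless streamlining of the same idea.
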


	\begin{proof}
		Let \( A_i=A\cap X_i \). By E-S Ch. I Thm. 13.2c, \( H^{m-1}(X) \simeq \oplus_{i=1}^N H^{m-1}(X_i) \) and \( H^{m-1}(A)\simeq \oplus_{i=1}^N (A_i) \). Moreover, the square
		\begin{align}
			\xymatrix{H^{m-1}(X) \ar@{->}[r]^-{\simeq} \ar@{->}[d]^{\iota(X,A)^*}  & \oplus H^{m-1}(X_i) \ar@{->}[d]^{\oplus \iota(X_i,A_i)^*} \\
			\tilde{H}^{m-1}(A)  \ar@{->}[r]^-{\simeq} &  \oplus H^{m-1}(A_i) }	
		\end{align}
		commutes since it does so for each summand of \( \oplus H^{m-1}(A_i) \). We may then apply Lemma \ref{lem:2A}.
	\end{proof}

	\begin{lem}\label{lem:6A}
		Suppose \( g:(X,A) \to (Y,B) \) is continuous. Let \( L_A \subset \tilde{H}^{m-1}(A)\setminus \{0\} \) and \( L_B = (g|_A^*)^{-1}(L_A) \). If \( X \) is a surface with coboundary \( \supset L_A \), then \( Y \) is a surface with coboundary \( \supset L_B \).
	\end{lem}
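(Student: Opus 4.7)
The proof should be a direct application of the naturality (functoriality) of \v{C}ech cohomology with respect to maps of pairs. First I would note that $g$ restricts to a continuous map $g|_A: A\to B$, and that the two inclusions fit into the commutative square of continuous maps
\begin{align*}
\xymatrix{A \ar@{->}[r]^{g|_A} \ar@{->}[d]_{\iota(X,A)} & B \ar@{->}[d]^{\iota(Y,B)} \\ X \ar@{->}[r]^{g} & Y}
\end{align*}
Applying reduced \v{C}ech cohomology reverses the arrows and yields the dual commutative square
\begin{align*}
\xymatrix{\tilde{H}^{m-1}(A)  & \tilde{H}^{m-1}(B)\ar@{->}[l]_{g|_A^*} \\ \tilde{H}^{m-1}(X) \ar@{->}[u]^{\iota(X,A)^*} & \tilde{H}^{m-1}(Y)\ar@{->}[l]_{g^*}\ar@{->}[u]_{\iota(Y,B)^*}}
\end{align*}
so that $g|_A^* \circ \iota(Y,B)^* = \iota(X,A)^* \circ g^*$.

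The argument is then by contrapositive. Suppose some $\beta \in L_B$ lies in the image of $\iota(Y,B)^*$, say $\beta=\iota(Y,B)^*(y)$ for some $y\in \tilde{H}^{m-1}(Y)$. By the definition $L_B=(g|_A^*)^{-1}(L_A)$, we have $g|_A^*(\beta)\in L_A$. Chasing the square, $g|_A^*(\beta)=g|_A^*(\iota(Y,B)^*(y)) = \iota(X,A)^*(g^*(y))$, which exhibits an element of $L_A$ in the image of $\iota(X,A)^*$, contradicting the hypothesis that $X$ is a surface with coboundary $\supset L_A$. Hence $L_B$ is disjoint from the image of $\iota(Y,B)^*$, as required.

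Finally I would observe that $L_B$ is automatically contained in $\tilde{H}^{m-1}(B)\setminus\{0\}$, since $g|_A^*(0)=0 \notin L_A$, so the hypothesis that $0$ is excluded from coboundary sets is preserved. The only mild subtlety to mention is the basepoint/empty-set convention for reduced $\tilde{H}^0$ recorded in the footnote on page one, which ensures the square above is well-defined even when $A$ or $B$ is empty; no further obstacles arise, since the entire argument is purely a diagram chase using the contravariant functoriality of \v{C}ech cohomology on pairs.
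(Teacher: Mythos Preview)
Your argument is correct and is exactly the paper's approach: the paper simply records the commutative square in cohomology induced by the map of pairs and declares the result ``evident,'' while you have written out the diagram chase explicitly. Your additional remarks about \(L_B\subset \tilde{H}^{m-1}(B)\setminus\{0\}\) and the empty-set convention are fine but not needed for the paper's version.
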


	\begin{proof}
		The proof is evident from the commutativity of the following square:
		\begin{align}
			\xymatrix{\tilde{H}^{m-1}(X) \ar@{<-}[r]^{g^*} \ar@{->}[d]^{\iota(X,A)^*}  & \tilde{H}^{m-1}(Y) \ar@{->}[d]^{\iota(Y,B)^*} \\
			\tilde{H}^{m-1}(A)  \ar@{<-}[r]^{(g|_A)^*} & \tilde{H}^{m-1}(B). }	
		\end{align}
	\end{proof}

	\begin{defn}
	 	\label{def:competitor}
	 	In particular, suppose a continuous map \( g:U \to U \) is the identity on \( A\subset U \). If \( X \in \Span(A,U,L,m) \), then \( g(X) \in \Span(A,U,L,m) \). The set \( g(X) \) is called a \emph{\textbf{competitor of \( X \) in \( U \)}}.
	\end{defn}

	\begin{lem}\label{lem:7A}
		Suppose \( X \) is a surface with coboundary \( \supset L \). If \( X \subset Y \), then \( Y \) is also a surface with coboundary \( \supset L \).
	\end{lem}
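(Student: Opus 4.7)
The plan is to invoke Lemma \ref{lem:6A} with the inclusion $g : (X,A)\to(Y,A)$, or equivalently to argue directly by functoriality of \v{C}ech cohomology. Since $A\subset X\subset Y$, the inclusion $\iota(Y,A)$ factors as $\iota(Y,A)=\iota(Y,X)\circ \iota(X,A)$, so contravariance gives
\[
\iota(Y,A)^* = \iota(X,A)^*\circ \iota(Y,X)^*,
\]
and hence $\operatorname{Im}(\iota(Y,A)^*)\subset \operatorname{Im}(\iota(X,A)^*)$.

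To deduce the conclusion, I would note that by hypothesis $L$ is disjoint from $\operatorname{Im}(\iota(X,A)^*)$, and since the image of $\iota(Y,A)^*$ is contained in the image of $\iota(X,A)^*$, it follows that $L$ is disjoint from $\operatorname{Im}(\iota(Y,A)^*)$ as well. That is exactly the assertion that $K^*(Y,A)\supset L$.

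Alternatively, one can apply Lemma \ref{lem:6A} directly: take $g : (X,A)\to(Y,A)$ to be the inclusion map. Then $g|_A$ is the identity on $A$, so $(g|_A)^*$ is the identity on $\tilde H^{m-1}(A)$, whence $L_B=(g|_A^*)^{-1}(L)=L$. Lemma \ref{lem:6A} then yields that $Y$ is a surface with coboundary $\supset L$, as required.

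There is essentially no obstacle here; the content of the lemma is just the functoriality of the pullback applied to the factorization of the inclusion $A\hookrightarrow Y$ through $X$. The only subtle point to verify is that none of the reduced cohomology conventions (in particular the convention for $\tilde H^0(\emptyset)$ in the footnote) interferes with the factorization, which is immediate since the relevant maps and sets are nonempty throughout.
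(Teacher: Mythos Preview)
Your argument is correct and is exactly the paper's approach: the inclusion \(A\hookrightarrow Y\) factors through \(X\), so \(\operatorname{Im}\iota(Y,A)^*\subset \operatorname{Im}\iota(X,A)^*\), which immediately gives \(K^*(Y,A)\supset L\). The alternative route via Lemma~\ref{lem:6A} is also valid and amounts to the same functoriality observation.
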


	\begin{proof}
		The inclusion \( A\hookrightarrow Y \) factors through \( X \), so the image of \( \iota(Y,A)^* \) is contained in the image of \( \iota(X,A)^* \).
	\end{proof}

	\begin{lem}\label{lem:8A}
		Let \( m=n \) and suppose \( A \) is the unit sphere in \( \R^n \). If \( X\supset A \) contains the closed unit ball, then \( K^*(X,A) = \tilde{H}^{n-1}(A)\setminus \{0\} \). If \( X\supset A \) does not contain the closed unit ball, then \( K^*(X, A) = \emptyset \).
	\end{lem}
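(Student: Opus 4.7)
The plan is to split into two cases according to whether $X$ contains the closed unit ball $\bar B^n$, invoking the previously established lemmas for the ``contains'' case and constructing a single explicit retraction for the ``does not contain'' case.

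Suppose first that $\bar B^n \subset X$. Then $A \subset \bar B^n \subset X$, and $\bar B^n$ is contractible, so Lemma \ref{lem:2A} yields $K^*(\bar B^n, A) = \tilde H^{n-1}(A)\setminus\{0\}$, i.e.\ $\bar B^n$ is a surface with coboundary $\supset \tilde H^{n-1}(A)\setminus\{0\}$. Lemma \ref{lem:7A}, applied with $\bar B^n$ in the role of the smaller surface and $X$ in the role of the larger, upgrades this to $X$, giving $K^*(X,A) \supset \tilde H^{n-1}(A)\setminus\{0\}$. The reverse inclusion is automatic: since $\iota(X,A)^*(0)=0$, the element $0$ always lies in the image of $\iota^*$, hence $0\notin K^*(X,A)$, so $K^*(X,A)\subset \tilde H^{n-1}(A)\setminus\{0\}$. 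Combining the two inclusions gives the claimed equality.

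Now suppose $\bar B^n \not\subset X$. Since $A\subset X$ already, any point of $\bar B^n$ missing from $X$ must lie in the interior; fix such a point $p\in \mathring{B}^n\setminus X$. The key construction is radial projection from $p$: for $x\in \R^n\setminus\{p\}$, let $r(x)$ be the unique intersection of $A$ with the ray from $p$ through $x$ on the far side of $p$ from $p$ itself. This is a continuous map $r: \R^n\setminus\{p\}\to A$, and for $x\in A$ one has $r(x)=x$. Because $p\notin X$, the restriction $r|_X : X \to A$ is defined and continuous, and satisfies $r\circ \iota(X,A) = \mathrm{id}_A$. Functoriality of reduced \v Cech cohomology then gives $\iota(X,A)^*\circ r^* = \mathrm{id}$ on $\tilde H^{n-1}(A)$, so $\iota(X,A)^*$ is surjective and $K^*(X,A)=\emptyset$.

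Neither step should present a genuine obstacle. The first case is essentially immediate from Lemmas \ref{lem:2A} and \ref{lem:7A}, with the one small thing to notice being that Lemma \ref{lem:7A} yields only the inclusion $\supset$, so one must separately record the automatic upper bound $K^*(X,A)\subset \tilde H^{n-1}(A)\setminus\{0\}$. The second case is a standard retraction argument; the only point worth verifying is that the missing point $p$ can be taken in the \emph{open} unit ball, which is forced by the hypothesis $A\subset X$.
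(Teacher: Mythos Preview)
Your proof is correct and follows essentially the same approach as the paper: invoke Lemmas \ref{lem:2A} and \ref{lem:7A} for the first case, and use that \( A \) is a retract of \( X \) (hence \( \iota(X,A)^* \) is surjective) for the second. Your version simply supplies more detail, namely the explicit radial projection from the missing interior point and the trivial observation that \( 0\notin K^*(X,A) \).
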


	\begin{proof}
		If \( X \) contains the closed unit ball \( B \), then Lemmas \ref{lem:2A} and \ref{lem:7A} prove the first statement. If \( X \) does not contain \( B \), then \( A \) is a retract of \( X \), and so \( \iota(X,A)^* \) must be surjective. 
	\end{proof}
 
	\begin{lem}\label{lem:10A} 
		Suppose \( f:I \times Y \to X \) is a continuous map. Set \( A_0 = f(\{0\} \times Y), \quad A_1 = f(\{1\} \times Y), \) and \( A = A_0 \cup A_1 \). Write \( f_0:= f\lfloor_{\{0\} \times Y} \) and \( f_1 :=  f\lfloor_{\{1\} \times Y} \). Suppose \( f_0 \) is a homeomorphism from \( \{0\}\times Y \) to \( A_0 \), and that we are given a subset \( L_0 \subset \tilde{H}^{m-1}(A_0)\setminus \{0\} \). Then there exists a subset \( L_1 \subset \tilde{H}^{m-1}(A_1)\setminus\{0\} \) satisfying two properties: \[ K^*(X,A) \cup (\iota(A,A_0)^*)^{-1}(L_0)  = K^*(X,A) \cup (\iota(A,A_1)^*)^{-1}(L_1) \] and if \( X \) is a surface with coboundary \( \supset L_0 \), then \( X \) is a surface with coboundary \( \supset L_1 \). 
	\end{lem}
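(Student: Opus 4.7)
My plan is to transport $L_0$ across the homotopy $f$ by constructing an explicit cohomology map $h^*:\tilde H^{m-1}(A_1)\to\tilde H^{m-1}(A_0)$ and defining $L_1$ as its preimage of $L_0$. The map $f$ realizes a homotopy between $\tilde f_0 := \iota(X,A_0)\circ f_0$ and $\tilde f_1 := \iota(X,A_1)\circ f_1$ viewed as maps $Y\to X$, so by homotopy invariance of \v{C}ech cohomology
\[
f_0^* \circ \iota(X,A_0)^* \;=\; f_1^* \circ \iota(X,A_1)^*
\]
as maps $\tilde H^{m-1}(X) \to \tilde H^{m-1}(Y)$. Because $f_0$ is a homeomorphism, $f_0^*$ is an isomorphism, so I set $h^* := (f_0^*)^{-1}\circ f_1^*$, producing the key identity $\iota(X,A_0)^* = h^*\circ\iota(X,A_1)^*$.

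With $L_1 := (h^*)^{-1}(L_0)\setminus\{0\}$, the second property is immediate: if some $\gamma \in L_1$ were equal to $\iota(X,A_1)^*(\beta)$, then $h^*(\gamma)$ would lie simultaneously in $L_0$ and in the image of $\iota(X,A_0)^*$ by the key identity, contradicting that $X$ is a surface with coboundary $\supset L_0$. For the first property, given $\alpha \notin K^*(X,A)$ write $\alpha = \iota(X,A)^*(\beta)$; functoriality $\iota(A,A_i)^*\circ\iota(X,A)^* = \iota(X,A_i)^*$ together with the key identity yields the chain of equivalences $\iota(A,A_0)^*(\alpha)\in L_0$ iff $h^*(\iota(A,A_1)^*(\alpha))\in L_0$ iff $\iota(A,A_1)^*(\alpha)\in L_1$, where the fact that $0\notin L_0$ absorbs the removed zero.

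The one subtle step is the use of $(f_0^*)^{-1}$, which is legitimate exactly because $f_0$ is assumed to be a homeomorphism. Without that hypothesis there is no canonical candidate for $h^*$ and the method breaks down; with it, the rest is routine diagram chasing. Consequently, the main conceptual obstacle is simply recognizing that the homeomorphism hypothesis and the homotopy $f$ together force the key identity transporting cohomology from $A_1$ to $A_0$.
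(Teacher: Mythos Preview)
Your proof is correct and follows essentially the same approach as the paper's. The paper's map \( (f_0^*)^{-1} g^* f_1^* \) (where \( g:\{0\}\times Y\to\{1\}\times Y \) is the obvious identification) is exactly your \( h^*=(f_0^*)^{-1}\circ f_1^* \) after identifying \( \{i\}\times Y \) with \( Y \); the definition \( L_1=(f_1^*)^{-1}(g^*)^{-1}f_0^*(L_0) \) coincides with your \( (h^*)^{-1}(L_0) \) (your ``\(\setminus\{0\}\)'' is harmless but redundant, since \( h^*(0)=0\notin L_0 \)), and both arguments hinge on the same homotopy identity \( \iota(X,A_0)^*=h^*\circ\iota(X,A_1)^* \).
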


	\begin{proof} 
		Define \( g: \{0\} \times Y \to \{1\} \times Y \) where \( g(0,y) = (1,y) \). Let \( L_0' := f_0^*(L_0) \subset \tilde{H}^{m-1}(\{0\} \times Y) \) and \( L_1':= (g^*)^{-1}(L_0') \in \tilde{H}^{m-1}(\{1\} \times Y) \). Finally define \( L_1 = (f_1^*)^{-1}(L_1') \).

		Let \( h \in (\iota(A,A_0)^*)^{-1}(L_0) \) and suppose \( h\notin K^*(X,A) \). That is, suppose \( h=\iota(X,A)^*(x) \) for some \( x\in  \tilde{H}(X) \). We want to show \( \iota(A,A_1)^*(h) \in L_1 \). That is, \( f_1^*\iota(A,A_1)^*(h) \in  L_1' \). In other words, \(g^* f_1^*\iota(A,A_1)^*(h) \in L_0' \), or \[(f_0^*)^{-1}g^* f_1^*\iota(A,A_1)^*\iota(X,A)^*(x) \in  L_0 .\] Since we have assumed \( h=\iota(X,A)^*(x)\in  (\iota(A,A_0)^*)^{-1}(L_0) \), it suffices to show \[ \iota(A,A_0)^*\iota(X,A)^*(x) = (f_0^*)^{-1}g^* f_1^*\iota(A,A_1)^*\iota(X,A)^*(x), \] which is verified by the fact that \( \iota(X,A_0) f_0 \) and \( \iota(X,A_1) f_1 g \) are homotopic. The other containment is proved in a similar manner.
		
 		For the last assertion, suppose \( h_1 \in L_1 \) and \( h_1 =  \iota(X, A_1)^*(x) \) for some \( x \in  \tilde{H}^{m-1}(X) \).  By definition of \( L_1 \), we know  \( h_0 = (f_0^*)^{-1}g^* f_1^*(h_1) \in L_0 \).  The inclusion map \( \iota(X,A_0) \) is homotopic to \( \iota(X,A_1) \circ f_1\circ g \circ f_0^{-1} \) via the homotopy \( \iota(X,A_t) \circ f_t \circ g_t \circ f_0^{-1} \) where \( A_t = f(\{t\} \times Y) \), \( f_t:= f\lfloor_{\{t\} \times Y} \) and \( g_t(0,y) = (t,y) \). Thus \( \iota(X,A_0)^* = (\iota(X,A_1) \circ f_1 \circ g \circ f_0^{-1})^* = (f_0^*)^{-1}g^* f_1^*\iota(X,A_1)^* \). 
Then \( h_0 = (f_0^*)^{-1}g^* f_1^*(h_1) = (f_0^*)^{-1}g^* f_1^*\iota(X, A_1)^*(x) = \iota(X,A_0)^*(x) \), contradicting our assumption that \( X \) is a surface with coboundary \( \supset L_0 \).
	\end{proof}
	
	It follows from Lemma \ref{lem:7A} that if \( Z \) is a surface with coboundary \( \supset L_0 \), then \( Z \cup f(I \times Y) \) is a surface with coboundary \( \supset L_1 \). 
	
	\begin{lem}\label{lem:11A}
		Suppose \( X = \cup_{r=1}^N X_r \), \( A \subset X \), and \( A_r \subset X_r \) for each \( r \). Let \( B = A \cup_r A_r \). For each \( r \), let \( L_r \subset \tilde{H}^{m-1}(A_r)\setminus \{0\} \) and suppose \( X_r \) is a surface with coboundary \( \supset L_r \). Suppose \( L \subset \tilde{H}^{m-1}(A)\setminus \{0\} \) satisfies 
		\begin{equation}\label{eq:E}
	 		(\iota(B,A)^*)^{-1}(L) \subset \cup_r (\iota(B,A_r)^*)^{-1}(L_r).
		\end{equation}
		Then \( X \) is a surface with coboundary \( \supset L \).
	\end{lem}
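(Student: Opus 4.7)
The plan is to argue by contradiction, using two different factorizations of the inclusion $A_r \hookrightarrow X$.

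First, I would suppose for contradiction that there exists $h \in L$ and $x \in \tilde{H}^{m-1}(X)$ with $\iota(X,A)^*(x) = h$. Since $A \subset B \subset X$ (because $A \subset X$ and $A_r \subset X_r \subset X$ for each $r$), the inclusion $A \hookrightarrow X$ factors as $A \hookrightarrow B \hookrightarrow X$, hence $\iota(X,A)^* = \iota(B,A)^* \circ \iota(X,B)^*$. Setting $y := \iota(X,B)^*(x) \in \tilde{H}^{m-1}(B)$, this gives $\iota(B,A)^*(y) = h \in L$, so $y \in (\iota(B,A)^*)^{-1}(L)$.

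By hypothesis \eqref{eq:E}, there exists some $r$ with $y \in (\iota(B,A_r)^*)^{-1}(L_r)$, i.e.\ $\iota(B,A_r)^*(y) \in L_r$. Now I would exploit the second factorization: the inclusion $A_r \hookrightarrow X$ factors both as $A_r \hookrightarrow B \hookrightarrow X$ and as $A_r \hookrightarrow X_r \hookrightarrow X$. Contravariance gives
\[ \iota(B,A_r)^* \circ \iota(X,B)^* \;=\; \iota(X,A_r)^* \;=\; \iota(X_r,A_r)^* \circ \iota(X,X_r)^*. \]
Applying both sides to $x$ yields
\[ \iota(B,A_r)^*(y) \;=\; \iota(X_r,A_r)^*\bigl(\iota(X,X_r)^*(x)\bigr), \]
which exhibits $\iota(B,A_r)^*(y) \in L_r$ as an element of the image of $\iota(X_r,A_r)^*$. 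This contradicts the assumption that $X_r$ is a surface with coboundary $\supset L_r$, completing the proof.

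There is no real obstacle here: the argument is a two-step diagram chase, and the only content beyond formal manipulation is the observation that $B$ sits between $A$ and $X$, so that the hypothesis \eqref{eq:E} on preimages inside $\tilde{H}^{m-1}(B)$ can be activated by the intermediate class $y = \iota(X,B)^*(x)$. The subtle point to be careful about is simply keeping track of the variance and verifying that every factorization used lives entirely within the relevant inclusions $A_r \subset X_r \subset X$ and $A_r \subset B \subset X$.
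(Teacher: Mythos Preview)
Your proof is correct and is essentially the same diagram chase as the paper's, just phrased by contradiction rather than directly: both factor \( \iota(X,A)^* \) through \( B \), invoke hypothesis~\eqref{eq:E} to land in some \( (\iota(B,A_r)^*)^{-1}(L_r) \), and then refactor through \( X_r \) to contradict the assumption on \( X_r \). The only cosmetic difference is that the paper takes an arbitrary \( k\in \tilde{H}^{m-1}(X) \) and shows \( \iota(X,A)^*k\notin L \), whereas you start from a hypothetical \( h\in L \) in the image.
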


	\begin{proof}
		Let \( k\in \tilde{H}^{m-1}(X) \). Then \( \iota(X,A)^* k = \iota(B,A)^* i(X,B)^* k \). By assumption, it suffices to show that \( \iota(X,B)^* k \) is not contained in \( (\iota(B, A_r)^*)^{-1} (L_r) \) for each \( r \). In other words, it suffices to show that \( \iota(X,A_r)^* k \) is not contained in \( L_r \), or equivalently, \( \iota(X_r,A_r)^*\iota(X,X_r)^* k \) is not contained in \( L_r \). Indeed, this is true since the image of \( \iota(X_r,A_r)^* \) is disjoint from \( L_r \) by assumption.
	\end{proof}

	\begin{lem}\label{lem:12A}
	 	Under the same assumptions of Lemma \ref{lem:11A}, suppose further that \( X_r \) and \( A_r \) are compact for each \( r \), that \( A \cap X_r \subset A_r \) for each \( r \), and that \( X_r \cap X_s = A_r \cap A_s \) for \( r \ne s \). Then \[ K^*(X,A) = \{x\in \tilde{H}^{m-1}(A): (\iota(B,A)^*)^{-1}(x)\subset \cup_r (\iota(B,A_r)^*)^{-1}(K^*(X_r,A_r))\}. \]
	 \end{lem}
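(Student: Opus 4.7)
My plan is to establish the two inclusions separately. The containment $(\supset)$ is a direct application of Lemma \ref{lem:11A}: given $x$ in the right-hand side, take $L = \{x\}$ and $L_r = K^*(X_r, A_r)$. Each $X_r$ is tautologically a surface with coboundary $\supset K^*(X_r, A_r)$, and the hypothesis (\ref{eq:E}) of Lemma \ref{lem:11A} is precisely the condition defining the right-hand side. The degenerate case $x = 0$ can be ignored: since $0 \in (\iota(B,A)^*)^{-1}(0)$ while $0 \notin K^*(X_r, A_r)$ for any $r$, the element $0$ never lies in the right-hand side, and of course $0 \notin K^*(X,A)$ either.

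For $(\subset)$ I would argue the contrapositive: assume some $y \in \tilde{H}^{m-1}(B)$ satisfies $\iota(B, A)^* y = x$ with $\iota(B, A_r)^* y \in \operatorname{image}(\iota(X_r, A_r)^*)$ for every $r$; then I must produce $z \in \tilde{H}^{m-1}(X)$ with $\iota(X, A)^* z = x$. Since $\iota(X, A)^* = \iota(B, A)^* \circ \iota(X, B)^*$, it suffices to lift $y$ to an element $z$ with $\iota(X, B)^* z = y$, and by exactness of the long exact sequence of the pair $(X, B)$ this amounts to showing that the connecting homomorphism $\delta : \tilde{H}^{m-1}(B) \to \tilde{H}^m(X, B)$ annihilates $y$.

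The added hypotheses of Lemma \ref{lem:12A} exist exactly to make this computation work. From $A \cap X_r \subset A_r$ and $X_r \cap X_s = A_r \cap A_s$ (for $r \neq s$) a brief induction gives that every $k$-fold intersection ($k \geq 2$) of the $X_r$'s coincides with the corresponding intersection of the $A_r$'s, so each relative cohomology $\tilde{H}^q(X_{r_0}\cap\cdots\cap X_{r_{k-1}},\, A_{r_0}\cap\cdots\cap A_{r_{k-1}})$ vanishes. Feeding the closed compact cover $\{(X_r, A_r)\}$ of the pair $(X, B)$ into the \v{C}ech Mayer-Vietoris spectral sequence
\[ E_1^{p,q} = \bigoplus_{|I| = p+1} \tilde{H}^q(X_I, A_I) \; \Longrightarrow \; \tilde{H}^{p+q}(X, B), \]
all columns with $p \geq 1$ drop out and the sequence collapses at $E_1$, giving a natural isomorphism $\tilde{H}^m(X, B) \cong \bigoplus_r \tilde{H}^m(X_r, A_r)$ compatible with the inclusions of pairs $(X_r,A_r) \hookrightarrow (X,B)$. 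Naturality of the long exact sequences then identifies $\delta(y)$ with the tuple $(\delta_r(\iota(B, A_r)^* y))_r$, each component of which is zero by the hypothesis on $y$. Hence $\delta(y) = 0$, the lift $z$ exists, and $\iota(X, A)^* z = \iota(B, A)^* y = x$, as required.

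The main technical obstacle is justifying the decomposition $\tilde{H}^m(X, B) \cong \bigoplus_r \tilde{H}^m(X_r, A_r)$ rigorously in \v{C}ech cohomology. Compactness of the $X_r$ and $A_r$ (and the closedness of $A$ that makes $B$ closed) is exactly what makes the Mayer-Vietoris spectral sequence for a finite closed cover of a compact Hausdorff pair available; without these assumptions one would have to worry about tautness and the usual pathologies of \v{C}ech theory. A more hands-on substitute is to identify $X/B$ with the wedge $\bigvee_r X_r/A_r$ (the natural continuous bijection is a homeomorphism, the source being compact and the target Hausdorff) and then invoke \v{C}ech excision together with the wedge axiom; this trades one piece of machinery for another but may be preferable depending on the reader's background.
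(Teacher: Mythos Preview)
Your argument is correct and follows essentially the same architecture as the paper's proof: both directions rest on the isomorphism \( H^m(X,B)\cong \bigoplus_r H^m(X_r,A_r) \), after which a diagram chase through the long exact sequences and the maps \( \iota(B,A_r)^* \) finishes the job. The paper obtains this isomorphism by Eilenberg--Steenrod excision for compact pairs (the case \( N=2 \)) together with induction on \( N \), and then concludes via the exact sequence of the pair \( (X,A) \) (showing \( \delta x=0 \) in \( H^m(X,A) \)); you instead invoke the Mayer--Vietoris spectral sequence for the closed cover \( \{(X_r,A_r)\} \) (or the quotient identification \( X/B\cong\bigvee_r X_r/A_r \)) and conclude via the exact sequence of the pair \( (X,B) \) (lifting \( y \) itself). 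These are interchangeable routes to the same fact; the paper's inductive excision argument is more elementary and keeps the toolkit minimal, while your spectral-sequence formulation makes the role of the intersection hypotheses \( X_r\cap X_s=A_r\cap A_s \) more transparent.
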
 

	\begin{proof}
		By Lemma \ref{lem:11A}, \( \{x\in \tilde{H}^{m-1}(A): (\iota(B,A)^*)^{-1}(x)\subset \cup_r (\iota(B,A_r)^*)^{-1}(K^*(X_r,A_r))\}\subset K^*(X,A) \). To show the reverse inclusion, we chase the diagram below. The unlabeled maps are given by inclusions, the rows and column are exact (E-S Ch. I Thm. 8.6c,) and the isomorphism is due to excision: Assuming \( N=2 \), the isomorphism follows from E-S Ch. I Thm. 14.2c and Ch. X Thm. 5.4. The general case follows from induction on \( N \). The triangle commutes by functoriality, the top ``square'' commutes because \( \delta \) is a natural transformation, and the bottom square commutes because it does so on each summand of \( \oplus H^m(X_r,A_r). \) Thus, the diagram commutes.

		Let \( x \in K^*(X,A) \) and suppose \( p \in (\iota(B,A)^*)^{-1}(x) \). Suppose there is no \( r \) such that \( \iota(B,A_r)^*(p) \in K^*(X_r,A_r) \). Then \( y = \oplus (\iota(B,A_r)^*)(p) \in \image \oplus \iota(X_r,A_r)^* \). Since the bottom row of the diagram is exact, \( (\oplus \d)(y) = 0 \), hence \( \d p = 0 \), hence \( \d x=0 \). But the left column is exact, and this gives a contradiction, since by assumption \( x \) is not in the image of \( \iota(X,A)^* \).
		\begin{align}
			\xymatrix{&H^m(X,A) \ar@{<-}[d]^{\d}  \ar@{<-}[rrdd] &\quad & \quad  \\ &\tilde{H}^{m-1}(A) \ar@{<-}[d]  \ar@{<-}[dr]  &\quad & \quad \\
			&\tilde{H}^{m-1}(X)  \ar@{->}[r]   & \tilde{H}^{m-1}(B) \ar@{->}[d] \ar@{->}[r]^{\d} & H^m(X,B) \ar@{->}[d]^{\cong}\\
			&\oplus \tilde{H}^{m-1}(X_r)\ar@{->}[r]  &\oplus \tilde{H}^{m-1}(A_r) \ar@{->}[r]^{\oplus \d} &\oplus H^m(X_r,A_r).}
		\end{align}
	\end{proof}

	\begin{lem}\label{lem:13}
		Suppose \( A, X \) and \( C \) are compact, with \( X \in \Span(A,\R^n,L,m) \) and \( \mathring{C}\cap A=\emptyset \). If \( Y\supset X\cap \fr\, C \) is a surface with coboundary \( \supset K^*(X\cap C, X\cap \fr\, C) \), then \( (X\setminus \mathring{C}) \cup Y \) is a surface with coboundary \( \supset L \).
	\end{lem}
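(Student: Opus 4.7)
The plan is to argue by contradiction: assume some $\ell \in L$ lies in the image of $\iota(Z,A)^*$, where $Z = (X\setminus\mathring{C}) \cup Y$, and then manufacture a class on $X$ that restricts to $\ell$ on $A$, violating $L \subset K^*(X,A)$.

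Since $\mathring{C}\cap A = \emptyset$, $A \subset X\setminus\mathring{C} \subset Z$, and the inclusion $A \hookrightarrow Z$ factors through $X\setminus\mathring{C}$. Similarly $X\cap\fr\,C$ is contained in both $X\setminus\mathring{C}$ and $Y$ (the latter by hypothesis), hence in $Z$. Pick $z \in \tilde{H}^{m-1}(Z)$ with $\iota(Z,A)^*(z) = \ell$ and set $z_1 = \iota(Z, X\setminus\mathring{C})^*(z)$, $z_2 = \iota(Z,Y)^*(z)$. Functoriality gives $\iota(X\setminus\mathring{C},A)^*(z_1) = \ell$, and the two further restrictions of $z_1$ and $z_2$ to $\tilde{H}^{m-1}(X\cap\fr\,C)$ coincide with a single class $p$.

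Now invoke the coboundary hypothesis on $Y$: the image of $\iota(Y, X\cap\fr\,C)^*$ is disjoint from $K^*(X\cap C, X\cap\fr\,C)$, which by definition is the complement of $\image\,\iota(X\cap C, X\cap\fr\,C)^*$. Since $p$ lies in $\image\,\iota(Y, X\cap\fr\,C)^*$, it must lie in $\image\,\iota(X\cap C, X\cap\fr\,C)^*$, so $p = \iota(X\cap C, X\cap\fr\,C)^*(w)$ for some $w \in \tilde{H}^{m-1}(X\cap C)$.

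The final step is Mayer--Vietoris for the closed cover $X = (X\setminus\mathring{C}) \cup (X\cap C)$ with intersection $X\cap\fr\,C$, which is available for \v{C}ech cohomology of compact sets through the same Eilenberg--Steenrod machinery used in Lemma \ref{lem:12A}. By construction the pair $(z_1,w)$ lies in the kernel of the difference-of-restrictions map, so by exactness it lifts to some $\tilde{z} \in \tilde{H}^{m-1}(X)$ with $\iota(X, X\setminus\mathring{C})^*(\tilde{z}) = z_1$. Then $\iota(X,A)^*(\tilde{z}) = \iota(X\setminus\mathring{C},A)^*(z_1) = \ell$, contradicting $\ell \in K^*(X,A)$. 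I expect the only delicate point to be the invocation of Mayer--Vietoris exactness in this framework; if one wishes to avoid it, one can instead reproduce the diagram chase of Lemma \ref{lem:12A} applied to the decomposition $X = (X\setminus\mathring{C})\cup(X\cap C)$ with $A_1 = A \cup (X\cap\fr\,C)$ and $A_2 = X\cap\fr\,C$ to extract the desired lift of $z_1$.
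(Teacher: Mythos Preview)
Your argument is correct and is, in substance, the same proof the paper gives: the paper applies Lemma~\ref{lem:12A} to the decomposition \( X=(X\cap C)\cup(X\setminus\mathring{C}) \) to describe \( K^*(X,A) \), and then Lemma~\ref{lem:11A} to glue \( Y \) with \( X\setminus\mathring{C} \). Your direct Mayer--Vietoris chase is exactly what one obtains by unpacking those two lemmas into a single contradiction argument, and your closing remark about reproducing the Lemma~\ref{lem:12A} diagram makes this identification explicit.
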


	\begin{proof}
		Let \( X_1 = X\cap C \), \( A_1=X\cap \fr\, C \), \( X_2=X\setminus \mathring{C} \), and \( A_2=A\cup A_1 \). Let \( L_1=K^*(X_1,A_1) \) and \( L_2=K^*(X_2, A_2) \). By Lemma \ref{lem:12A}, \[ K^*(X,A)= \left\{x\in \tilde{H}^{m-1}(A): (\iota(A_2,A)^*)^{-1}(x)\subset \left((\iota(A_2,A_1)^*)^{-1} L_1\right)\cup L_2\right\}. \] Now apply Lemma \ref{lem:11A}, using the set \( Y \) in place of \( X_1 \). The result follows, since \( L\subset K^*(X,A) \).
	\end{proof}

	\begin{lem}\label{lem:13A}
		Suppose \( A = A_1 \cup A_2 \) where \( A_1 \) and \( A_2 \) are compact. Let \( D = A_1 \cap A_2 \) and suppose \( B\supset D \) is compact. Let \( m\geq 2 \). Suppose the homomorphism \( \iota(B,D)^*: \tilde{H}^{m-2}(B)\to \tilde{H}^{m-2}(D) \) is zero. Then \[ (\iota(A\cup B,A)^*)^{-1} (H^{m-1}(A)\setminus \{0\}) \subset \cup_{i=1,2}(\iota(A\cup B, A_i\cup B)^*)^{-1} (H^{m-1} (A_i\cup B )\setminus \{0\}). \]
	 \end{lem}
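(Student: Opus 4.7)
The plan is to argue by contrapositive, using a Mayer--Vietoris comparison between the decompositions $A \cup B = (A_1 \cup B) \cup (A_2 \cup B)$ and $A = A_1 \cup A_2$. The key observation is that, since $B \supset D$, the first decomposition has intersection $(A_1 \cup B) \cap (A_2 \cup B) = (A_1 \cap A_2) \cup B = B$, whereas the second has intersection $A_1 \cap A_2 = D$. Thus the inclusions $D \hookrightarrow B$ and $A_i \hookrightarrow A_i \cup B$ induce a morphism from the second Mayer--Vietoris sequence to the first.

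First I would write the relevant segments of the reduced \v{C}ech Mayer--Vietoris sequences
\[ \tilde{H}^{m-2}(B) \xrightarrow{\delta_1} \tilde{H}^{m-1}(A \cup B) \xrightarrow{\psi_1} \tilde{H}^{m-1}(A_1 \cup B) \oplus \tilde{H}^{m-1}(A_2 \cup B) \]
and
\[ \tilde{H}^{m-2}(D) \xrightarrow{\delta_2} \tilde{H}^{m-1}(A) \xrightarrow{\psi_2} \tilde{H}^{m-1}(A_1) \oplus \tilde{H}^{m-1}(A_2), \]
together with the naturality square $\iota(A \cup B, A)^{*} \circ \delta_1 = \delta_2 \circ \iota(B,D)^{*}$. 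Since $m \geq 2$, reduced and unreduced cohomology agree in degree $m-1$, matching the groups in the statement. Now take any $x \in H^{m-1}(A \cup B)$ in the complement of the right-hand side of the conclusion, i.e., with $\iota(A \cup B, A_i \cup B)^{*}(x) = 0$ for $i = 1, 2$, equivalently $\psi_1(x) = 0$. By exactness of the first sequence, $x = \delta_1(y)$ for some $y \in \tilde{H}^{m-2}(B)$. Applying the naturality square together with the hypothesis $\iota(B,D)^{*} = 0$ gives
\[ \iota(A \cup B, A)^{*}(x) = \delta_2\bigl(\iota(B,D)^{*}(y)\bigr) = \delta_2(0) = 0, \]
so $x$ lies in the complement of the left-hand side as well, which proves the contrapositive.

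The main obstacle is justifying these Mayer--Vietoris sequences and their naturality in \v{C}ech cohomology for the given compact pairs. Each sequence is obtained from the long exact sequence of a pair combined with an excision isomorphism---namely $H^{*}(A \cup B, A_2 \cup B) \cong H^{*}(A_1 \cup B, B)$ and $H^{*}(A, A_2) \cong H^{*}(A_1, D)$. Both excisions hold for closed subsets of a compact Hausdorff space in \v{C}ech cohomology by E--S Ch. I Thm. 14.2c and Ch. X Thm. 5.4, the same tools invoked in the proof of Lemma \ref{lem:12A}. The commutative square then follows from the naturality of the connecting homomorphism with respect to the inclusion of pairs $(A_1, D) \hookrightarrow (A_1 \cup B, B)$ and $(A, A_2) \hookrightarrow (A \cup B, A_2 \cup B)$.
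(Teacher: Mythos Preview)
Your proof is correct and follows essentially the same approach as the paper: both set up the naturality square between the Mayer--Vietoris sequences of the triads $(A\cup B, A_1\cup B, A_2\cup B)$ and $(A, A_1, A_2)$, then chase the diagram using the hypothesis that $\iota(B,D)^*=0$. The only minor difference is that the paper separately notes the trivial case $D=\emptyset$, which you implicitly absorb into the same argument.
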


	\begin{proof} 
		Suppose \( D \) is non-empty. The map \( (A,A_1,A_2) \to (A\cup B, A_1\cup B,A_2\cup B) \) is a map of compact, and hence proper triads (E-S Ch. X Thm 5.4,) and thus carries the reduced Mayer-Vietoris sequence of the second into the first (E-S 15.4c.) Chase the resulting commutative diagram, observing that \( \cup_{i=1,2}(\iota(A\cup B, A_i\cup B)^*)^{-1} (H^{m-1} (A_i\cup B )\setminus \{0\}) = (\iota_1^*,\iota_2^*)^{-1}(H^{m-1}(A_1\cup B)\oplus H^{m-1}(A_2\cup B)\setminus \{0\}) \):
		\begin{align}
			\xymatrixcolsep{3pc}\xymatrix{\tilde{H}^{m-2}(B) \ar@{->}[d]^0 \ar@{->}[r]^\Delta &H^{m-1}(A\cup B) \ar@{->}[d] \ar@{->}[r]^-{(\iota_1^*,\iota_2^*)} &H^{m-1}(A_1\cup B)\oplus H^{m-1}(A_2\cup B) \ar@{->}[d] \\
			\tilde{H}^{m-2}(D) \ar@{->}[r]^\Delta &H^{m-1}(A) \ar@{->}[r] &H^{m-1}(A_1)\oplus H^{m-1}(A_2). }
		\end{align}
		If \( D \) is empty, then we still have the right hand commuting square, and the map out of \( H^{m-1}(A) \) is an isomorphism, and in particular, injective. This proves the lemma.
	\end{proof}

	\begin{lem}\label{lem:15A}
		Suppose \( A = \cup_{r=0}^N A_r \), where each \( A_r \) is compact. Let \( D_r = A_0 \cap A_r, \quad 1 \le r \le N \) and suppose \( A_r \cap A_s = \emptyset \), \( 1 \le r < s \le N \). Let \( m\geq 2 \). For each \( 1\le r\le N \), suppose \( B_r\supset D_r \) is compact and that the homomorphism \( \iota(B_r,D_r)^*: \tilde{H}^{m-2}(B_r) \to \tilde{H}^{m-2}(D_r) \) is zero. Furthermore, suppose that the intersection \( A_r\cap B_s \) is empty for all \( 1\leq r<s\leq N \). Let
		\begin{align*}
			&C = A \cup_{r=1}^N B_r, \\
			&C_0 = A_0 \cup_{r=1}^N B_r, \,\, \mathrm{ and}\\
			&C_r = A_r \cup B_r, \, 1 \le r \le N.
		\end{align*}
		Then \[ (\iota(C,A)^*)^{-1} (H^{m-1}(A)\setminus \{0\}) \subset \cup_{r=0}^N (\iota(C, C_r )^*)^{-1} (H^{m-1} (C_r)\setminus \{0\}). \]
	\end{lem}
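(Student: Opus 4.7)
The plan is to prove the lemma by iterating Lemma~\ref{lem:13A} a total of $N$ times, each iteration ``peeling off'' one of the peripheral pairs $(A_{N-k+1},B_{N-k+1})$. First I define the intermediate sets
\[
V^{(k)} := (A_0 \cup A_1 \cup \cdots \cup A_{N-k}) \cup B_{N-k+1} \cup \cdots \cup B_N, \qquad k=0,1,\ldots,N,
\]
so that $V^{(0)} = A$ and $V^{(N)} = A_0 \cup B_1 \cup \cdots \cup B_N = C_0$.

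At the $k$-th step I would write $V^{(k-1)} = \gamma_1 \cup \gamma_2$ with
\[
\gamma_1 = (A_0 \cup A_1 \cup \cdots \cup A_{N-k}) \cup B_{N-k+2} \cup \cdots \cup B_N, \qquad \gamma_2 = A_{N-k+1},
\]
and verify the critical identity $\gamma_1 \cap \gamma_2 = D_{N-k+1}$. The pairwise disjointness $A_s \cap A_{N-k+1} = \emptyset$ for $s \in \{1,\ldots,N-k\}$ collapses the $A$-contributions to $A_0 \cap A_{N-k+1} = D_{N-k+1}$, while the asymmetric hypothesis $A_r \cap B_s = \emptyset$ for $r<s$, applied with $r = N-k+1$ and $s \in \{N-k+2,\ldots,N\}$, kills all the $B$-contributions. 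Setting $B := B_{N-k+1}$ (so that $\iota(B,D_{N-k+1})^*$ on $\tilde{H}^{m-2}$ is zero by hypothesis), I apply Lemma~\ref{lem:13A} to this decomposition; the relevant output sets are $V^{(k-1)} \cup B$, $\gamma_1 \cup B = V^{(k)}$, and $\gamma_2 \cup B = C_{N-k+1}$.

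The main argument then runs as follows. Fix $x \in H^{m-1}(C)$ with $\iota(C,A)^* x \neq 0$, which is the same as $\iota(C,V^{(0)})^* x \neq 0$. Inductively assuming $\iota(C,V^{(k-1)})^* x \neq 0$, restrict $x$ along the inclusion $V^{(k-1)} \cup B_{N-k+1} \subset C$ and apply Lemma~\ref{lem:13A}: the restricted class maps nontrivially to either $V^{(k)}$ or $C_{N-k+1}$, so by functoriality of $\iota^*$, either $\iota(C,V^{(k)})^* x \neq 0$ (continue to step $k+1$) or $\iota(C,C_{N-k+1})^* x \neq 0$ (terminate with $r = N-k+1$). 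If the iteration reaches $k = N$ without ever terminating, then $\iota(C,V^{(N)})^* x = \iota(C,C_0)^* x \neq 0$, terminating with $r = 0$. In every case $x$ lies in $(\iota(C,C_r)^*)^{-1}(H^{m-1}(C_r) \setminus \{0\})$ for some $r \in \{0,1,\ldots,N\}$, which is the desired inclusion.

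The main obstacle is establishing the clean intersection identity $\gamma_1 \cap \gamma_2 = D_{N-k+1}$ at each step, and this is precisely what the asymmetric disjointness hypothesis $A_r \cap B_s = \emptyset$ for $r<s$ is tailored to deliver: at step $k$ the piece $A_{N-k+1}$ being peeled off must be disjoint from every ``remaining'' $B_s$ with $s > N-k+1$. Reversing the processing order (e.g.~peeling off $A_1$ first) would fail, since $B_1$ is not assumed disjoint from $A_2,\ldots,A_N$, so the order of the peeling is forced by the hypothesis.
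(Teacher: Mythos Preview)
Your proposal is correct and is essentially the paper's own proof, up to a reindexing: your $V^{(k)}$ is the paper's $E_{N-k}$, and your step $k$ (peeling off $A_{N-k+1}$ with $B_{N-k+1}$) is the paper's step $N-k+1$ in its downward induction. Your observation that the peeling order is forced by the asymmetric hypothesis $A_r\cap B_s=\emptyset$ for $r<s$ is exactly the content of the paper's remark that this hypothesis ``guarantees that $`A_1\text{'}\cap `A_2\text{'}=D_k$.''
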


	\begin{proof} 
		For \( 0\leq k\leq N-1 \), let \( E_k=A_0\cup\cdots\cup A_k\cup B_{k+1}\cup\cdots\cup B_N \), and Let \( E_N=A \). For \( 1\leq k\leq N \), we may apply Lemma \ref{lem:13A} to the sets \( ``A_1"=A_k \), \( ``A_2"=A_0\cup\cdots\cup A_{k-1}\cup B_{k+1}\cup\cdots\cup B_N \), and \( ``B"=B_k \), since the assumption \( A_k\cap B_j=\emptyset \) for all \( 1\leq k<j\leq N \) guarantees that \( ``A_1"\cap ``A_2"=D_k \). The following inclusion therefore holds for all \( 1\leq k\leq N \):
		\begin{align*}
			(\iota(E_k\cup B_k, E_k)^*)^{-1}&(H^{m-1}(E_k)\setminus\{0\})\subset\\
			 (\iota(E_k\cup B_k, E_{k-1})^*)^{-1}(H^{m-1}(E_{k-1})\setminus \{0\})&\cup (\iota(E_k\cup B_k, C_k)^*)^{-1}(H^{m-1}(C_k)\setminus \{0\}).
		\end{align*}
		Taking the inverse image in \( H^{m-1}(C) \) of the above sets by the map \( \iota(C, E_k\cup B_k)^* \), this yields
		\begin{align*}
			(\iota(C, E_k)^*)^{-1}&(H^{m-1}(E_k)\setminus\{0\})\subset\\
			(\iota(C, E_{k-1})^*)^{-1}(H^{m-1}(E_{k-1})\setminus \{0\})&\cup (\iota(C, C_k)^*)^{-1}(H^{m-1}(C_k)\setminus \{0\}).
		\end{align*}
		The result follows from downward induction on \( k \) starting at \( k=N \), since \( E_N=A \) and \( E_0=C_0 \).
	\end{proof}

	\begin{lem}\label{lem:16A}
		Suppose \( A = \cup_{r=0}^N A_r \) where each \( A_r \) is compact. Let \( D_r = A_{r-1}\cap A_r,\quad 1 \le r \le N \) and suppose \( A_r \cap A_s = \emptyset \) if \( |r-s| > 1 \). Let\footnote{Note the strict inequality.} \( m>2 \). For each \( 1\le r\le N \), suppose \( B_r\supset D_r \) is compact and that the homomorphism \( \iota(B_r,D_r)^*: H^{m-2}(B_r) \to H^{m-2}(D_r) \) is zero. Let \( B_0 = B_{N+1} = \emptyset \), and suppose further that \( B_r \cap A_{r-1}=D_r \) and \( B_r \cap A_s=\emptyset \) for all \( 1\leq r \leq N \) and \( 0\leq s < r-1 \). Let
		\begin{align*}
			&C = A \cup_{r=1}^N B_r,\\
			&C_{-1}= \cup_{r=1}^N B_r, \,\, \mathrm{ and}\\
			&C_r = B_r \cup A_r \cup B_{r+1},\,0\le r\le N.
		\end{align*}
		Then \[ (\iota(C,A)^*)^{-1}(H^{m-1}(A)\setminus \{0\}) \subset \cup_{r=-1}^N(\iota(C,C_r)^*)^{-1}(H^{m-1}(C_r)\setminus \{0\}). \]

		Furthermore, if \( B_r\cap B_s=\emptyset \) for all \( r\neq s \), then \[ (\iota(C,A)^*)^{-1}(H^{m-1}(A)\setminus \{0\}) \subset \cup_{r=0}^N(\iota(C,C_r)^*)^{-1}(H^{m-1}(C_r)\setminus \{0\}). \]
	\end{lem}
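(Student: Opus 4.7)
The plan is to prove the lemma by induction on $N$, peeling off the rightmost piece $C_N$ at each step. The base case $N = 0$ is immediate: $A = A_0 = C_0$, $C = A_0$, $C_{-1} = \emptyset$, and the asserted inclusion collapses to an equality.

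For the inductive step $N \ge 1$, set $A' := A_0 \cup \cdots \cup A_{N-1}$ and decompose $A = A' \cup A_N$. The chain hypothesis gives $A' \cap A_N = D_N$ since $A_N \cap A_j = \emptyset$ for $j < N - 1$. The filler $B_N$ satisfies the hypotheses of Lemma \ref{lem:13A}, and $A_N \cup B_N = C_N$ because $B_{N+1} = \emptyset$. Lemma \ref{lem:13A} therefore yields an inclusion of $(\iota(A \cup B_N, A)^*)^{-1}(H^{m-1}(A)\setminus\{0\})$ into the union of $(\iota(A \cup B_N, A' \cup B_N)^*)^{-1}(H^{m-1}(A' \cup B_N)\setminus\{0\})$ and $(\iota(A \cup B_N, C_N)^*)^{-1}(H^{m-1}(C_N)\setminus\{0\})$. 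Pulling back under $\iota(C, A \cup B_N)^*$ replaces the ambient set $A \cup B_N$ by $C$ throughout, using the factorization $(\iota(C,Y)^*)^{-1}(S) = (\iota(C,Z)^*)^{-1}((\iota(Z,Y)^*)^{-1}(S))$ valid for $Y \subset Z \subset C$.

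Next, apply the inductive hypothesis to the shortened chain $A_0, A_1, \ldots, A_{N-2}, A_{N-1} \cup B_N$ with fillers $B_1, \ldots, B_{N-1}$. The chain hypotheses transfer cleanly because $B_N \cap A_s = \emptyset$ for all $s < N - 1$: the new joint is $A_{N-2} \cap (A_{N-1} \cup B_N) = D_{N-1}$, and the non-adjacent disjointness conditions are preserved. The filler conditions for $B_1, \ldots, B_{N-1}$ are unchanged. A direct check shows the ``new $C_r$'' equals $C_r$ for $0 \le r \le N - 1$, while the ``new $C_{-1}$'' equals $B_1 \cup \cdots \cup B_{N-1} \subset C_{-1}$. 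Applying induction, pulling back to $C$, and absorbing the ``new $C_{-1}$'' into $C_{-1}$ by monotonicity of preimages establishes the first conclusion.

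For the stronger conclusion when $B_r \cap B_s = \emptyset$ for $r \ne s$, the set $C_{-1} = \bigsqcup_{r=1}^N B_r$ is a disjoint union, so $H^{m-1}(C_{-1})$ is the product of the $H^{m-1}(B_r)$ (using $m - 1 > 0$). If $\iota(C, C_{-1})^*(x) \ne 0$, then $\iota(C, B_r)^*(x) \ne 0$ for some $r$; the factorization $\iota(C, B_r)^* = \iota(C_r, B_r)^* \circ \iota(C, C_r)^*$ forces $\iota(C, C_r)^*(x) \ne 0$, absorbing $C_{-1}$ into the union over $0 \le r \le N$. The main subtlety will be that the induction must peel from the right end of the chain, because the hypothesis supplies $B_r \cap A_s = \emptyset$ only for $s < r - 1$; this is precisely what is needed when $A_N$ is absorbed into $A_{N-1} \cup B_N$, whereas peeling $A_0$ off first would demand disjointness conditions on $B_1$ with later $A_s$ that are not hypothesized.
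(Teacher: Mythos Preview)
Your proof is correct. Both your argument and the paper's proceed by peeling off one piece at a time via Lemma~\ref{lem:13A}, but the packaging differs. The paper introduces an explicit filtration $E_k = A_0\cup\cdots\cup A_k\cup B_{k+1}\cup\cdots\cup B_N$ interpolating between $E_N=A$ and $E_{-1}=C_{-1}$, and at step $k$ applies Lemma~\ref{lem:13A} with the two-piece filler $``B\text{''}=B_k\cup B_{k+1}$, checking that $\iota(B_k\cup B_{k+1},\,D_k\cup D_{k+1})^*$ vanishes via the disjointness of $D_k$ and $D_{k+1}$. You instead induct on $N$: apply Lemma~\ref{lem:13A} once with the single filler $``B\text{''}=B_N$, then invoke the inductive hypothesis on the shortened chain whose last term is $A_{N-1}\cup B_N$. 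Your verification that the hypotheses transfer to this modified chain is correct (the key point being $B_N\cap A_s=\emptyset$ for $s\le N-2$), and your remark that one must peel from the right end---since only $B_r\cap A_s$ for $s<r-1$ is controlled---identifies precisely the asymmetry that governs both arguments. The second conclusion is handled identically in the two proofs. Your packaging is arguably a bit cleaner, since each invocation of Lemma~\ref{lem:13A} uses a single $B_r$ rather than a union of two.
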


	\begin{proof}
		For \( 0\leq k\leq N \), let \( E_k=A_0\cup\cdots\cup A_k\cup B_{k+1}\cup\cdots\cup B_{N+1} \). Let \( E_{-1}=C_{-1} \) and \( D_0=\emptyset \). For \( 1\leq k\leq N \), let us apply Lemma \ref{lem:13A} to the sets \( ``A_1"=A_k \), \( ``A_2"=A_0\cup\cdots\cup A_{k-1}\cup B_{k+1}\cup\cdots\cup B_{N+1} \), and \( ``B"=B_k\cup B_{k+1} \). For \( k=0 \), use \( ``A_1"=A_0 \), \( ``A_2"=C_{-1} \), and \( ``B"=B_1 \). We may do so, because our assumption on the intersections \( B_r\cap A_s \) imply \( ``A_1"\cap ``A_2"=D_k\cup D_{k+1} \). This union being disjoint, the homomorphism \( \iota(``B",``D")^*=\iota(B_k\cup B_{k+1}, D_k\cup D_{k+1})^* \) is given by the direct sum \( \iota(B_k\cup B_{k+1}, D_k)^*\oplus \iota(B_k\cup B_{k+1},D_{k+1})^* \), both of which are zero. As in the proof of Lemma \ref{lem:15A}, the following inclusion therefore holds for all \( 0\leq k\leq N \):
		\begin{align*}
			(\iota(C, E_k)^*)^{-1}&(H^{m-1}(E_k)\setminus\{0\})\subset\\
			(\iota(C, E_{k-1})^*)^{-1}(H^{m-1}(E_{k-1})\setminus \{0\})&\cup (\iota(C, C_k)^*)^{-1}(H^{m-1}(C_k)\setminus \{0\}).
		\end{align*}
		This gives the first conclusion. If \( B_r\cap B_s=\emptyset \) for all \( r\neq s \), then by additivity, \[ (\iota(C, C_{-1})^*)^{-1}(H^{m-1}(C_{-1})\setminus \{0\})=\cup_r (\iota(C,B_r)^*)^{-1}(H^{m-1}(B_r)\setminus \{0\}).\] For each \( r \), we have \( (\iota(C,B_r)^*)^{-1}(H^{m-1}(B_r)\setminus \{0\})\subset (\iota(C,C_r)^*)^{-1}(H^{m-1}(C_r)\setminus \{0\}) \) by functoriality, thus giving the second conclusion.
	\end{proof}

	\begin{lem}\label{lem:17Apre}
		If the topological dimension \( \mathrm{dim}(A) \) of \( A \) is \( \le m-2 \), then \( H^{m-1}(A) = 0 \).
	\end{lem}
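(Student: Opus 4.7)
The plan is to derive this directly from the definition of \v{C}ech cohomology as a direct limit over open covers, combined with the characterization of topological (covering) dimension by refinements whose nerves have bounded dimension.

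First I would recall that \( \check{H}^{m-1}(A;G) \) is the direct limit of the simplicial cohomology groups \( H^{m-1}(N(\mathcal{U});G) \) taken over the directed system of open covers \( \mathcal{U} \) of \( A \) (ordered by refinement), where \( N(\mathcal{U}) \) is the nerve of \( \mathcal{U} \). Next, I would invoke the definition of covering dimension: \( \mathrm{dim}(A)\leq m-2 \) means that every open cover of \( A \) admits an open refinement of order at most \( m-1 \), i.e.\ no point lies in more than \( m-1 \) members of the refinement. Equivalently, the nerve \( N(\mathcal{U}) \) of such a refinement is a simplicial complex of dimension at most \( m-2 \).

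The key observation is then that the subfamily of covers whose nerves have dimension \( \leq m-2 \) is cofinal in the system of all open covers. Since a simplicial complex of dimension \( \leq m-2 \) has no simplices of dimension \( m-1 \), its simplicial cochain group \( C^{m-1}(N(\mathcal{U});G) \) vanishes, and hence \( H^{m-1}(N(\mathcal{U});G)=0 \). The direct limit over a cofinal subsystem of zero groups is zero, which gives \( \check{H}^{m-1}(A;G)=0 \).

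The only real obstacle is taking care that the definitions being used match: that \( A \subset \R^n \) is metrizable (hence paracompact), so that covering dimension agrees with the standard inductive dimensions and the cover-refinement characterization is available, and that the \v{C}ech cohomology used in this paper is indeed the direct limit over open covers (as opposed to a variant involving compact pairs or closed covers). For subsets of Euclidean space both conventions coincide, so no additional argument is needed beyond citing the relevant standard fact (e.g.\ from Hurewicz--Wallman or Eilenberg--Steenrod Ch.~X). No calculation is required.
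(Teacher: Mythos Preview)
Your proposal is correct and follows essentially the same route as the paper: both use that covering dimension $\le m-2$ gives a cofinal family of covers whose nerves are simplicial complexes of dimension $\le m-2$, whence the $(m-1)$-st cochain group (and so cohomology) on each such nerve vanishes and the direct limit is zero. The paper phrases the last step as ``any class pulls back to zero under refinement'' rather than invoking cofinality explicitly, and separates out the trivial $m=1$ case, but the argument is the same.
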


	\begin{proof}
		if \( m=1 \), the result is trivial. if \( m>1 \), then every open cover \( U \) admits a refinement \( V \) of order \( \le m-2 \) (\cite{hurewicz} Theorem V 1.) The nerve \( N \) of \( V \) is then a simplicial complex of dimension \( \le m-2 \), and so if \( x\in H^{m-1}(A) \) is represented by a simplicial cochain on the nerve of \( U \), it must pull back to the zero cochain on \( N \). Thus, \( x=0 \).
	\end{proof}

	\begin{lem}\label{lem:17A}
		 If \( \H^{m-1}(A) = 0 \), then \( H^{m-1}(A) = 0 \). 
	\end{lem}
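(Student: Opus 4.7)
The plan is to reduce the claim directly to Lemma \ref{lem:17Apre}, whose conclusion is exactly $H^{m-1}(A) = 0$ under the hypothesis $\dim A \leq m-2$. Thus the entire content of the present lemma is measure-theoretic-to-dimension-theoretic: from $\H^{m-1}(A) = 0$, I need to deduce that the topological covering dimension of $A$ is at most $m-2$.

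To establish this implication I would invoke the classical theorem of dimension theory that, for a separable metric space $X$ and a positive integer $p$, vanishing of $\H^p(X)$ forces $\dim X \leq p - 1$ (Theorem VII 3 of \cite{hurewicz}, a sharpening of Szpilrajn's inequality $\dim X \leq \dim_H X$, which only gives $\dim X \leq p$ under the same hypothesis). Applied with $p = m - 1$, this yields $\dim A \leq m - 2$, at which point Lemma \ref{lem:17Apre} immediately concludes the proof. Note the previous lemma already cites \cite{hurewicz} for the covering-dimension construction, so this citation pattern is consistent with what the paper has been doing.

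The main obstacle, were one to avoid citing \cite{hurewicz} directly, would be to construct arbitrarily fine open refinements of $A$ whose nerves have simplicial dimension at most $m - 2$. The natural starting point is to use $\H^{m-1}(A) = 0$ to produce covers by balls $B(x_i, r_i)$ with $\sum r_i^{m-1}$ arbitrarily small, but such ball covers typically have large multiplicity, so the nerves need not have small dimension. Extracting the required refinement of order $\leq m-2$ would involve a Vitali-type selection combined with small perturbations of the radii to destroy high-order intersections (a slicing-by-spheres argument in the ambient $\R^n$). Since this is a standard result and is precisely what \cite{hurewicz} proves, quoting the dimension-theoretic theorem and appealing to Lemma \ref{lem:17Apre} is the most economical route.
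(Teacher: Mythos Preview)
Your proposal is correct and follows essentially the same route as the paper: the paper's proof reads ``If \( m=1 \), the result is trivial. If \( m>1 \), then \( \mathrm{dim}(A) \le m-2 \) by \cite{hurewicz} Theorem VII 3,'' and then implicitly invokes Lemma \ref{lem:17Apre}. The only cosmetic difference is that the paper isolates the case \( m=1 \) (where \( \H^0(A)=0 \) forces \( A=\emptyset \)) before citing the Hurewicz--Wallman theorem, whereas you state the latter for a positive integer \( p=m-1 \) without separately treating \( m=1 \); you may wish to add a one-line remark covering that edge case.
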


	\begin{proof}
		If \( m=1 \), the result is trivial. If \( m>1 \), then \( \mathrm{dim}(A) \le m-2 \) by \cite{hurewicz} Theorem VII 3.
	\end{proof}

	\begin{lem}\label{lem:21B}
		Suppose \( X \) is compact and \( X=\varprojlim X_i \), where \( \{X_i\} \) is a system of compact surfaces with coboundary \( \supset L \), directed under inclusion. Then \( X \) is a surface with coboundary \( \supseteq L \).
	\end{lem}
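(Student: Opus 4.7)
The plan is to reduce this to the continuity (compactness) property of \v{C}ech cohomology. Since each \( X_i \) is a compact subset of \( \R^n \) and the bonding maps of the inverse system are inclusions, the limit \( X \) is canonically homeomorphic to the intersection \( \bigcap_i X_i \), itself a compact set containing \( A \). The classical Eilenberg--Steenrod continuity axiom (Ch.~X Thm.~3.1, together with its reduced variant) supplies a natural isomorphism
\[
\varinjlim_i \tilde{H}^{m-1}(X_i) \;\xrightarrow{\;\cong\;}\; \tilde{H}^{m-1}(X),
\]
where the direct system is formed with transition maps \( \iota(X_i, X_j)^* \) for \( X_j \subset X_i \), and the map to \( \tilde{H}^{m-1}(X) \) is assembled from the inclusion-induced maps \( \iota(X_i, X)^* : \tilde{H}^{m-1}(X_i) \to \tilde{H}^{m-1}(X) \).

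Granting this, I would argue by contradiction. Suppose some \( \alpha \in L \) lay in the image of \( \iota(X,A)^* \); write \( \alpha = \iota(X,A)^*(\beta) \) for some \( \beta \in \tilde{H}^{m-1}(X) \). By the continuity isomorphism, \( \beta \) is represented at a finite stage: there is an index \( i \) and a class \( \beta_i \in \tilde{H}^{m-1}(X_i) \) with \( \iota(X_i, X)^*(\beta_i) = \beta \). The inclusion \( A \hookrightarrow X_i \) factors as \( A \hookrightarrow X \hookrightarrow X_i \), so contravariant functoriality gives \( \iota(X_i, A)^* = \iota(X,A)^* \circ \iota(X_i, X)^* \). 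Consequently
\[
\alpha \;=\; \iota(X,A)^*\,\iota(X_i, X)^*(\beta_i) \;=\; \iota(X_i, A)^*(\beta_i),
\]
which exhibits \( \alpha \in L \) inside the image of \( \iota(X_i, A)^* \). This contradicts the hypothesis that \( X_i \) is a surface with coboundary \( \supset L \), and completes the argument.

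The only genuine work is invoking the right continuity statement. For \( m - 1 \geq 1 \) reduced and unreduced \v{C}ech cohomology coincide on nonempty compacta, so the ordinary continuity axiom applies verbatim; for \( m = 1 \) one appeals to the natural splitting relating \( H^0 \) and \( \tilde{H}^0 \) together with exactness of filtered direct limits. The footnote convention for \( \tilde{H}^0(\emptyset) \) causes no trouble: the hypothesis \( L \subset \tilde{H}^{m-1}(A) \setminus \{0\} \) is vacuous if \( A = \emptyset \), and otherwise every \( X_i \supset A \) is nonempty so the inverse system is well-behaved. Beyond this single appeal to continuity, the proof is a formal diagram chase.
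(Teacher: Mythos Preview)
Your proof is correct and follows the same approach as the paper: both invoke the continuity axiom for \v{C}ech cohomology to identify \( \tilde{H}^{m-1}(X) \) with \( \varinjlim \tilde{H}^{m-1}(X_i) \), and then observe that the image of \( \iota(X,A)^* \) is the union of the images of the \( \iota(X_i,A)^* \). The paper states this in one sentence while you spell out the contradiction explicitly and add remarks on the \( m=1 \) case, but the content is the same.
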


	\begin{proof}
		By continuity of \v{C}ech cohomology, the obvious map \( \varinjlim \tilde{H}^{m-1}(X_i)\to \tilde{H}^{m-1}(X) \) is an isomorphism, and in particular a surjection, so the image of \( \iota(X,A)^* \) is the union of the images of \( \iota(X_i,A)^* \).
	\end{proof}

	\begin{lem}\label{lem:21C}
		If \( \{X_i\} \) is a sequence of compact surfaces with coboundary \( \supset L \) and \( X_i \to X \) in the Hausdorff metric, then \( X \) is a surface with coboundary \( \supset L \).
	\end{lem}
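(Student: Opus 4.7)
The plan is to reduce Lemma \ref{lem:21C} to Lemma \ref{lem:21B} by producing a decreasing inverse system of compact surfaces with coboundary \( \supset L \) whose intersection is exactly \( X \).

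Concretely, for each \( j \in \N \), I would set
\[
Y_j := \overline{\bigcup_{i\geq j} X_i}.
\]
First I would verify that each \( Y_j \) is compact: Hausdorff convergence gives an \( N \) with \( X_i \subset \mathcal{N}(X,1) \) for all \( i \geq N \), so \( \bigcup_{i\geq j}X_i \) is contained in the bounded set \( X_j\cup\cdots\cup X_{N-1}\cup \overline{\mathcal{N}(X,1)} \), and hence \( Y_j \) is closed and bounded in \( \R^n \). Since every \( X_i \) contains \( A \), we have \( A \subset Y_j \) for every \( j \). Because \( X_j \subset Y_j \) and \( X_j \) is a surface with coboundary \( \supset L \), Lemma \ref{lem:7A} implies that \( Y_j \) is itself a surface with coboundary \( \supset L \).

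Next I would observe that \( Y_1 \supset Y_2 \supset \cdots \), so \( \{Y_j\} \) is an inverse system directed under inclusion. The key point is to identify \( \varprojlim Y_j = \bigcap_j Y_j \) with \( X \). The inclusion \( X \subset Y_j \) is automatic, since every point of \( X \) is a Hausdorff limit of points lying in \( X_i \) for \( i \geq j \). For the reverse inclusion, Hausdorff convergence gives, for each \( \e > 0 \), an index \( N(\e) \) with \( X_i \subset \mathcal{N}(X,\e) \) for all \( i \geq N(\e) \); taking closures yields \( Y_{N(\e)} \subset \overline{\mathcal{N}(X,\e)} \). Intersecting over all \( \e > 0 \) and using that \( X \) is closed gives \( \bigcap_j Y_j \subset \bigcap_{\e>0}\overline{\mathcal{N}(X,\e)} = X \).

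Applying Lemma \ref{lem:21B} to the system \( \{Y_j\} \) then yields that \( X = \varprojlim Y_j \) is a surface with coboundary \( \supset L \), completing the proof. There is no real obstacle here: the only nontrivial check is the equality \( \bigcap_j Y_j = X \), and that is essentially the definition of Hausdorff convergence.
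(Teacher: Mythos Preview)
Your proof is correct and follows essentially the same approach as the paper: the paper defines \( Y_j = X \cup \bigcup_{i\geq j} X_i \), invokes Lemma \ref{lem:7A}, and applies Lemma \ref{lem:21B}. Your sets \( Y_j = \overline{\bigcup_{i\geq j} X_i} \) in fact coincide with the paper's (Hausdorff convergence gives \( X \subset \overline{\bigcup_{i\geq j} X_i} \), and conversely the paper's union is already closed), so the two arguments are the same up to presentation, with yours spelling out the compactness and \( \bigcap_j Y_j = X \) verifications that the paper leaves implicit.
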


	\begin{proof}
		By Lemma \ref{lem:7A}, the sets \( Y_j = X \cup \cup_{i=j}^\i X_i \) satisfy the conditions of Lemma \ref{lem:21B}.
	\end{proof}

	\begin{lem}\label{lem:corespans}
		If \( (X,A) \) is compact and \( X \) is a surface with coboundary \( \supseteq L \), then \( X^*\cup A \) is a surface with coboundary \( \supseteq L \).
	\end{lem}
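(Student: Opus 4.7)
The plan is to apply Lemma \ref{lem:21C} by producing a decreasing sequence of compact surfaces with coboundary $\supseteq L$ converging to $X^* \cup A$ in Hausdorff distance, obtained by successively excising small balls chosen from $X \setminus (X^* \cup A)$ via Lemma \ref{lem:13}.

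For each $p \in X \setminus (X^* \cup A)$, I choose an open ball $B_p = B(p, r_p)$ with three properties. First, $B_p \cap A = \emptyset$, which is possible since $A$ is closed and $p \notin A$. Second, $\H^m(X \cap B_p) = 0$, which is possible since $p \notin X^*$; note that this automatically forces $B_p \cap X^* = \emptyset$, for any $q \in B_p \cap X^*$ would admit a neighborhood inside $B_p$ with positive $\H^m$-measure in $X$, contradicting the second property. Third, $\H^{m-1}(X \cap \fr B_p) = 0$. For the third property, I start with any radius $r_0$ realising the second and apply the Eilenberg inequality to the $1$-Lipschitz distance function $d(\cdot, p)$:
\[
\int_0^{r_0} \H^{m-1}(X \cap S(p,r))\, dr \leq c\, \H^m(X \cap B(p,r_0)) = 0,
\]
so $\H^{m-1}(X \cap S(p,r)) = 0$ for almost every $r \in (0, r_0)$, and I pick such an $r_p$. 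By Lindel\"of, I extract from $\{B_p\}$ a countable subcover $\{B_i\}_{i \geq 1}$ of $X \setminus (X^* \cup A)$.

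Set $X_N := X \setminus \bigcup_{i=1}^N B_i$, which is compact and contains $A$. By induction on $N$, each $X_N$ is a surface with coboundary $\supseteq L$: the base $X_0 = X$ is the hypothesis, and for the inductive step I apply Lemma \ref{lem:13} to $X_{N-1}$ with $C := \overline{B_N}$ and $Y := X_{N-1} \cap \fr C$. Since $X_{N-1} \subseteq X$, we have $\H^{m-1}(X_{N-1} \cap \fr C) \leq \H^{m-1}(X \cap \fr B_N) = 0$, so by Lemma \ref{lem:17A} (assuming $m \geq 2$; the case $m = 1$ would require separate handling, since $\H^1 = 0$ does not force $\tilde H^0 = 0$) we get $\tilde H^{m-1}(X_{N-1} \cap \fr C) = 0$. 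This makes the map $\iota(X_{N-1} \cap C, X_{N-1} \cap \fr C)^*$ trivially surjective, so $K^*(X_{N-1} \cap C, X_{N-1} \cap \fr C) = \emptyset$ and $Y$ vacuously satisfies the coboundary hypothesis of Lemma \ref{lem:13}. The lemma then delivers $(X_{N-1} \setminus \mathring C) \cup Y = X_N$ as a surface with coboundary $\supseteq L$.

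The decreasing sequence $\{X_N\}$ of compact sets satisfies $\bigcap_N X_N = X^* \cup A$, since $\bigcup_i B_i$ covers $X \setminus (X^* \cup A)$ yet is disjoint from $X^* \cup A$; hence it converges to $X^* \cup A$ in Hausdorff distance. Lemma \ref{lem:21C} now gives the conclusion. The main obstacle is arranging $\H^{m-1}(X \cap \fr B_p) = 0$ \emph{simultaneously} with $B_p \cap X^* = \emptyset$; this is exactly what the Eilenberg inequality provides, leveraging the vanishing of the $m$-dimensional measure near points outside the core to produce enough good radii on which the $(m-1)$-dimensional spherical slice is negligible.
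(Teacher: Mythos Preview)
Your argument is correct, and it follows a genuinely different route from the paper's. The paper argues directly that \(\iota(X,X^*\cup A)^*\) is surjective: since \(\H^m(X\setminus X^*)=0\), dimension theory (Hurewicz--Wallman) gives \(\dim(X\setminus X^*)\le m-1\), so one can cover \(X\setminus X_\e\) by finitely many relatively open sets \(U_i\) with \(\dim(\partial U_i)\le m-2\); setting \(C_\e=X\setminus\overline{\cup U_i}\), Lemma~\ref{lem:17Apre} and Mayer--Vietoris for the compact triad \((X,\overline{\cup U_i},C_\e)\) yield surjectivity of \(\iota(X,C_\e)^*\), and continuity of \v Cech cohomology along \(C_\e\searrow X^*\cup A\) finishes. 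Your proof instead trades dimension theory for the coarea/Eilenberg inequality to produce balls whose bounding spheres meet \(X\) in \(\H^{m-1}\)-null sets, then excises them one at a time via Lemma~\ref{lem:13} and passes to the limit with Lemma~\ref{lem:21C}. The paper's approach immediately yields the stronger remark following the lemma (that any compact \(Y\subset X\) with \(A\subset Y\) and \(\dim(X\setminus Y)\le m-1\) is again a surface with coboundary \(\supset L\)); your approach is more modular, reusing the earlier swap and limit lemmas, and avoids dimension theory entirely.

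Two minor points. First, Lemma~\ref{lem:13} is stated for \(X\in\Span(A,\R^n,L,m)\), which includes \(\H^m(X)<\infty\); its proof, however, uses only the coboundary hypothesis, so your invocation is legitimate (and in any case each \(X_{N-1}\subset X\)). Second, your parenthetical about \(m=1\) is unnecessary: you have \(\H^{m-1}(X_{N-1}\cap\fr C)=0\), and for \(m=1\) this is \(\H^0=0\), forcing the set to be empty, whence \(\tilde H^0=0\) by the paper's convention on \(\tilde H^0(\emptyset)\). So no separate handling is needed.
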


	\begin{proof}
		The inclusion of \( A \) into \( X \) factors through \( X^*\cup A \), so it suffices to show \( \iota(X,X^*\cup A)^*:\tilde{H}^{m-1}(X)\to \tilde{H}^{m-1}(X^*\cup A) \) is surjective. For \( \e>0 \), let \( X_\e=X\cap \O(X^*\cup A, \e) \). Since \( \H^m(X\setminus X^*)=0 \), it follows from \cite{hurewicz} Theorem VII 3 that \( \mathrm{dim}(X\setminus X^*) \leq m-1 \). Since \( X\setminus X_\e \) is compact, we may cover \( X\setminus X_\e \) by a finite number of open subsets \( U_i \) of \( X \), \( i=1,\dots,n \), such that for each \( i \), \( U_i\subset \O(p_i,\e/2) \) for some \( p_i\in X\setminus X^* \), and \( \mathrm{dim}(\p U_i)\leq m-2 \). Define \( B_\e=\overline{\cup_{i=1}^N U_i} \) and \( C_\e=X\setminus B_\e \). Then \( B_\e \) and \( C_\e \) are compact, \( B_\e\subset X\setminus X^* \), and \( X^*\cup A\subset C_\e\subset X_\e \). Furthermore, since \( B_\e\cap C_\e=\p(B_\e)\subset \cup{i=1}^N \p U_i \), it follows from \cite{hurewicz} Theorem III 1 that \( \mathrm{dim}(B_\e\cap C_\e)\leq m-2 \). By Lemma \ref{lem:17Apre}, \( H^{m-1}(B_\e\cap C_\e)=0 \). The Mayer-Vietoris sequence applied to the compact triad \( (X, B_\e, C_\e) \) thus implies that \( \iota(X,C_\e)^*:H^{m-1}(X)\to H^{m-1}(C_\e) \) is surjective. Finally, since \( X^*\cup A=\varprojlim C_\e \), the result follows from the continuity of \v{C}ech cohomology.
	\end{proof}

	In fact, the above proof shows that if \( Y\subset X \) is compact and contains \( A \), and \( \mathrm{dim}(X\setminus Y)\leq m-1 \), then \( Y \) is a surface with coboundary \( \supset L \).

	\section{Results specific to \( L^\mathcal{R} \)}

		\begin{thm}
			\label{thm:manifoldspans}
			Suppose \( A \) is an \( (m-1) \)-dimensional closed \( \mathcal{R} \)-orientable manifold and \( X \) is a compact \( \mathcal{R} \)-orientable manifold with boundary \( A \), then \( X \) is a surface with coboundary \( \supset L^\mathcal{R} \).
		\end{thm}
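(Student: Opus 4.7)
The plan is to use the cohomology long exact sequence of the pair $(X,A)$: showing that $L^\mathcal{R}$ is disjoint from the image of $\iota^*$ is equivalent to showing that the connecting homomorphism $\delta$ is nonzero on every element of $L^\mathcal{R}$, and I will detect this non-vanishing by pairing against the fundamental class of $X$ supplied by Lefschetz duality.

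First, since $X$ and $A$ are compact topological manifolds (in particular ANRs), \v{C}ech and singular cohomology agree, so I have access to the long exact sequence
$$\cdots \to \tilde{H}^{m-1}(X) \xrightarrow{\iota^*} \tilde{H}^{m-1}(A) \xrightarrow{\delta} H^m(X,A) \to \cdots,$$
and the task reduces to showing $\delta(\alpha)\neq 0$ for each $\alpha \in L^\mathcal{R}$. Because $X$ is compact and $\mathcal{R}$-orientable with boundary $A$, Lefschetz duality supplies a fundamental class $[X]\in H_m(X,A;\mathcal{R})$ and the isomorphism $\cdot\cap[X]: H^m(X,A;\mathcal{R})\xrightarrow{\cong} H_0(X;\mathcal{R})$; moreover $\partial[X]=\sum_l \varepsilon_l[A_l]$ in $H_{m-1}(A;\mathcal{R})$, where $\varepsilon_l$ is a unit encoding the induced boundary orientation on the component $A_l$.

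Next I fix $\alpha\in L_i$, a generator of $H^{m-1}(A_i)\simeq \mathcal{R}$ extended by zero to the remaining components of $A$ (and, for $m=1$, then projected to $\tilde{H}^0(A)$). Let $X_j$ be the component of $X$ whose boundary contains $A_i$. Then $\partial[X_j] = \varepsilon_i[A_i] + \sum_{A_l\subset \partial X_j,\, l\neq i}\varepsilon_l[A_l]$, and using the standard adjunction $\langle\delta\alpha,[X_j]\rangle = \langle\alpha,\partial[X_j]\rangle$ together with the fact that $\alpha$ is supported on $A_i$, this pairing collapses to $\varepsilon_i\langle\alpha|_{A_i},[A_i]\rangle$, a unit of $\mathcal{R}$ by the choice of $\alpha$. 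Hence $\delta\alpha\neq 0$, so $\alpha\notin\mathrm{im}(\iota^*)$, as required.

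The main subtlety I anticipate is the $m=1$ case, where $L^\mathcal{R}$ lives in reduced cohomology: one must verify that the long exact sequence remains exact after passing to $\tilde{H}^0$ (it does) and that the pairing argument descends to the quotient by constants. The latter holds because each arc component $X_j$ of $X$ has $\partial[X_j] = [q_j]-[p_j]$, so any constant cochain on $A$ pairs trivially with $\partial[X_j]$ and the computation above is well-defined on $\tilde{H}^0(A)$. A routine secondary check is that Lefschetz duality applies as stated, which it does by the compactness and $\mathcal{R}$-orientability hypotheses on $X$.
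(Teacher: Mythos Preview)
Your proof is correct and follows essentially the same approach as the paper: both pass to singular cohomology via the ANR/CW property, take the fundamental class \( [X]\in H_m(X,A) \) whose boundary is the fundamental class of \( A \), and then use the Kronecker pairing to see that an element of \( L^\mathcal{R} \) evaluates to a unit on the relevant boundary component. The only cosmetic difference is that the paper argues by contradiction with \( \iota^* \) directly (pairing a putative lift \( \omega \) against \( \iota_*\partial[X]=0 \)), whereas you phrase the same computation via the connecting map \( \delta \) and the adjunction \( \langle\delta\alpha,[X_j]\rangle=\langle\alpha,\partial[X_j]\rangle \).
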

		\begin{proof}
			The result is obvious if \( m=1 \). Let \( m>1 \). Since \( A \) and \( X \) have the homotopy type of \( CW \)-complexes (\cite{hatcher} Cor A.12,) we may treat the \v{C}ech cohomology groups involved in the definition of ``surface with coboundary'' as singular cohomology groups, since the two theories are naturally isomorphic. We proceed by contradiction. Suppose there exists \( \phi\in L^\mathcal{R} \) with \( \iota(X,A)^*(\omega)=\phi \) for some \( \omega\in H^{m-1}(X) \). Writing \( A=\cup A_i \) where the \( A_i \)'s are the connected components of \( A \), this means that there exists \( j \) such that \( \iota(X,A_j)^*\omega \) is a generator of \( H^{m-1}(A_j) \), and \( \iota(X,A_i)^*\omega=0 \) for all \( i\neq j \). 

			Since \( X \) is \( \mathcal{R} \)-orientable, let \( \eta\in H_m(X,A) \) be a fundamental class (see \cite{hatcher} p.253.) Then \( \nu=\p\eta\in H_{m-1}(A) \) is a fundamental class for \( A \) (\cite{hatcher} p.260,) and by exactness, \( \iota(X,A)_*\nu=0 \). Write \( \nu=\sum \iota(A,A_i)_* \nu_i \). Then \( \nu_i \) is a fundamental class for \( A_i \) for each \( i \). We have
			\begin{align*}
				0&=\o(\iota(X,A)_*\nu)\\
				&=\iota(X,A)^*\o(\nu)\\
				&=\phi(\nu)\\
				&=\sum\phi\left(\iota(A,A_i)_*\nu_i\right)\\
				&=\sum \iota(A,A_i)^*\phi (\nu_i)\\
				&=\iota(A,A_j)^*\phi (\nu_j)\\
				&\neq 0
			\end{align*}
where the last line follows from Poincar\'e duality (\cite{hatcher} Thm 3.30.)
		\end{proof}

		By reducing mod 2, the universal coefficient theorem gives the following corollary:

		\begin{cor}
			\label{cor:nonorientablespans}
			If \( A \) is an \( (m-1) \)-dimensional closed orientable manifold and \( X \) is a compact manifold with boundary \( A \), then \( X \) is a surface with coboundary \( \supset L^\Z \).
		\end{cor}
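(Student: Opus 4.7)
The plan is to deduce the corollary from Theorem \ref{thm:manifoldspans} by taking $\mathcal{R}=\Z/2$ and then transferring the conclusion back to $\Z$-coefficients using the natural mod-$2$ reduction.

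First I would note that every topological manifold is $\Z/2$-orientable, and that an orientable manifold is a fortiori $\Z/2$-orientable. Hence both $A$ and $X$ satisfy the hypotheses of Theorem \ref{thm:manifoldspans} with $\mathcal{R}=\Z/2$, and so $X$ is a surface with coboundary $\supset L^{\Z/2}$; i.e., no element of $L^{\Z/2}\subset \tilde H^{m-1}(A;\Z/2)$ lies in the image of $\iota(X,A)^*_{\Z/2}$.

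Next I would argue by contradiction: suppose $\phi\in L^\Z\subset \tilde H^{m-1}(A;\Z)$ is in the image of $\iota(X,A)^*_{\Z}$, say $\phi=\iota(X,A)^*_\Z(\omega)$. The short exact sequence of coefficients $0\to\Z\xrightarrow{\cdot 2}\Z\to\Z/2\to 0$ induces a natural transformation $\rho:H^{m-1}(-;\Z)\to H^{m-1}(-;\Z/2)$ (the universal coefficient theorem identifies $\rho$ with reduction of $\Z$-valued cocycles mod $2$). Naturality of $\rho$ gives a commutative square relating $\iota(X,A)^*_{\Z}$ and $\iota(X,A)^*_{\Z/2}$, so $\rho(\phi)=\iota(X,A)^*_{\Z/2}(\rho(\omega))$ lies in the image of $\iota(X,A)^*_{\Z/2}$.

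The key remaining step, and the one I expect to require the most care, is to verify $\rho(\phi)\in L^{\Z/2}$, which together with the previous paragraph contradicts the conclusion of the first paragraph. Writing $A=\bigsqcup A_i$ and using that $\rho$ is compatible with the direct sum decomposition $H^{m-1}(A;\mathcal R)\cong\bigoplus_i H^{m-1}(A_i;\mathcal R)$ and with the inclusions $A_i\hookrightarrow A$, this reduces to the assertion that for each component $A_i$, $\rho$ sends a generator $\pm 1$ of $H^{m-1}(A_i;\Z)\cong\Z$ to the nonzero generator of $H^{m-1}(A_i;\Z/2)\cong\Z/2$. This is immediate from the description of $\rho$ as reduction of coefficients, since $\pm 1\mapsto 1\pmod 2$. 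The case $m=1$, where $L^{\mathcal R}$ is defined as the projection onto reduced cohomology, follows analogously because the projection and $\rho$ commute. This completes the contradiction and hence the proof.
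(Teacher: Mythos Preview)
Your proof is correct and follows essentially the same approach as the paper: apply Theorem~\ref{thm:manifoldspans} with $\mathcal{R}=\Z/2$, then for a hypothetical $\omega\in H^{m-1}(X;\Z)$ with $\iota(X,A)^*\omega\in L^\Z$, pass to $\Z/2$-coefficients and derive a contradiction. The only cosmetic difference is that the paper constructs the mod-$2$ class $\tilde\omega$ via the universal coefficient theorem and checks $\iota(X,A)^*\tilde\omega\in L^{\Z/2}$ by evaluating on fundamental classes, whereas you invoke the reduction map $\rho$ directly and check $\rho(\phi)\in L^{\Z/2}$ by observing $\pm 1\mapsto 1\pmod 2$; these are equivalent formulations of the same argument.
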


		\begin{proof}
			It follows from Theorem \ref{thm:manifoldspans} that \( X \) is a surface with coboundary \( \supset L^{\Z/2\Z} \). Suppose there exists \( \omega\in H^{m-1}(X) \) and \( j \) such that \( \iota(X,A_j)^*\omega \) is a generator of \( H^{m-1}(A_j) \), and \( \iota(X,A_i)^*\omega=0 \) for all \( i\neq j \). The cohomology class \( \omega \) gives a homomorphism \( f_\o: H_{m-1}(X;\Z)\to \Z \), and by composing with the reduction map \( \Z \to \Z/2\Z \), a homomorphism \( \tilde{f}_\o:H_{m-1}(X;\Z)\to \Z/2\Z \). Since
			\[
			H^{m-1}(X;\Z/2\Z)\to \mathrm{Hom}(H_{m-1}(X;\Z),\Z/2\Z)\to 0
			\]
			is exact, the map \( \tilde{f}_\o \) lifts to a cohomology class \( \tilde{\o}\in H^{m-1}(X;\Z/2\Z) \). Since \( [A_i]_{\Z/2\Z} \), the \( \Z/2\Z \) fundamental class of \( A_i \), is the reduction mod-\( 2 \) of \( [A_i]_\Z \), the \( \Z \) fundamental class of \( A_i \), it follows from naturality of the universal coefficient theorem and the tensor-hom adjunction that
			\begin{align*}
				\iota(X,A_i)^*\tilde{\o}\left([A_i]_{\Z/2\Z}\right)&=\iota(X,A_i)^*\tilde{f}_\o\left([A_i]_\Z\right)\\
				&=\tilde{f}_\o\left(\iota(X,A_i)_*[A_i]_\Z\right)\\
				&=\o\left(\iota(X,A_i)_*[A_i]_\Z\right)\textrm{ mod }2,
			\end{align*}
			which equals \( 1 \) if \( i=j \) and \( 0 \) otherwise. In other words, \( \iota(X,A)^*\tilde{\o}\in L^{\Z/2\Z} \), giving a contradiction.
		\end{proof}

		\begin{thm}
			\label{thm:flatspans}
			Suppose \( A \) is an \( (m-1) \)-dimensional closed orientable manifold. Suppose \( X \) is a compact set which can be written in the form \( X=A\cup\cup_i X_i \), where \( \cup_i X_i \) is a increasing union of a sequence \( \{X_i\} \) of compact manifolds with boundary, such that for each \( i \), \( \p X_i\cup A \) is the boundary of a compact manifold with boundary \( B_i \), and such that \( B_i\to A \) in the Hausdorff metric. Then \( X \) is a surface with coboundary \( \supset L^\Z \).
		\end{thm}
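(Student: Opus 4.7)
The plan is to realize \( X \) as the Hausdorff limit of a sequence of compact sets each known to be a surface with coboundary \( \supset L^\Z \), and then invoke Lemma \ref{lem:21C}. Set \( Z_i := X \cup B_i \). Since \( A \subset X \) and \( B_i \to A \) in the Hausdorff metric,
\[
d_H(Z_i, X) \;=\; \sup_{p \in B_i} d(p, X) \;\leq\; \sup_{p \in B_i} d(p, A) \;\longrightarrow\; 0,
\]
so \( Z_i \to X \). Each \( Z_i \) is compact, so it will suffice to verify that each \( Z_i \) is a surface with coboundary \( \supset L^\Z \).

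Since \( X \supset X_i \), we have \( Z_i \supset Y_i := X_i \cup B_i \); by Lemma \ref{lem:7A} it is enough to prove \( Y_i \) is a surface with coboundary \( \supset L^\Z \). For this, form the abstract compact manifold \( M_i := X_i \sqcup_{\partial X_i} B_i \) obtained by gluing \( X_i \) and \( B_i \) along the common boundary piece \( \partial X_i \subset \partial B_i \). Because \( \partial B_i = \partial X_i \cup A \), the remaining boundary of \( M_i \) is exactly \( A \). Corollary \ref{cor:nonorientablespans} (which requires orientability only of \( A \), not of the spanning manifold) then shows that \( M_i \) is a surface with coboundary \( \supset L^\Z \). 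The tautological continuous surjection \( \pi_i \colon M_i \to Y_i \subset \R^n \) is the identity on \( A \), so applying Lemma \ref{lem:6A} with \( g = \pi_i \) and \( L_A = L^\Z \) gives \( (\pi_i|_A^*)^{-1}(L^\Z) = L^\Z \), whence \( Y_i \) is a surface with coboundary \( \supset L^\Z \).

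The main technical subtlety lies in the gluing step: one needs \( M_i \) to be a well-defined compact manifold with boundary \( A \), which implicitly uses that \( \partial X_i \) and \( A \) sit disjointly (or at worst cleanly) inside \( \partial B_i \), as is the case in the intended applications and ensured by the hypothesis that \( \partial B_i = \partial X_i \cup A \) is itself the boundary of a manifold. Passing through the abstract \( M_i \) rather than attempting to treat \( Y_i \) directly as a submanifold of \( \R^n \) is what sidesteps any pathology in how \( X_i \) and \( B_i \) may overlap in the ambient space; once \( Y_i \) is handled, Lemma \ref{lem:7A}, the Hausdorff convergence of \( Z_i \) to \( X \), and Lemma \ref{lem:21C} chain together routinely to conclude.
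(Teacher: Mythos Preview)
Your proof is correct and follows essentially the same route as the paper: both reduce to showing that \( X_i\cup B_i \) is a surface with coboundary \( \supset L^\Z \) via the glued manifold \( M_i \) and Corollary~\ref{cor:nonorientablespans} together with Lemma~\ref{lem:6A}, then pass to \( X \) by a limiting argument. The only cosmetic difference is that the paper writes \( C_N=X\cup\bigcup_{i\geq N} B_i \) and invokes Lemma~\ref{lem:21B} directly, whereas you use \( Z_i=X\cup B_i \) and Lemma~\ref{lem:21C}; since Lemma~\ref{lem:21C} is itself proved from Lemma~\ref{lem:21B}, the two arguments are equivalent.
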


		\begin{proof}
			Writing \( C_N=X\cup \cup_{i=N}^\infty B_i \), it suffices to show, by Lemma \ref{lem:21B}, that for all \( N \), the set \( C_N \) is a surface with coboundary \( \supset L^\Z \). Since \( A\subset X_N\cup B_N\subset C_N \), it suffices to show, by Lemma \ref{lem:7A}, that \( X_N\cup B_N \) is a surface with coboundary \( \supset L^\Z \). Indeed it is, since the compact manifold formed by gluing \( X_N \) and \( B_N \) along their common boundary \( \p X_N \) is a manifold with boundary \( A \), and is thus a surface with coboundary \( \supset L^\Z \) by Corollary \ref{cor:nonorientablespans}. The set \( X_N\cup B_N \) is the continuous image of this manifold, and therefore is a surface with coboundary \( \supset L^\Z \) by Lemma \ref{lem:6A}.
		\end{proof}

		\begin{prop}
			\label{prop:unionspans}	
			Suppose \( A=A_1\cup\cdots\cup A_k \), where \( A_1,\dots, A_k \) are \( (m-1) \)-dimensional closed \( \mathcal{R} \)-orientable manifolds. Suppose that every non-empty intersection of the \( A_i \)'s is also a \( (m-1) \)-dimensional closed manifold, or equivalently that every component of \( A \) is contained in some \( A_i \). Then \( A \) is a \( \mathcal{R} \)-orientable closed manifold, and if \( X_i \) is a surface with coboundary \( \supset L^\mathcal{R}(A_i) \), \( i=1,\dots\k \), then \( X=\cup_i X_i \) is a surface with coboundary \( \supset L^\mathcal{R}(A) \).
		\end{prop}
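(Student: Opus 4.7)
The plan is to reduce the statement about $A$ to a statement about a single $A_i$ by restricting cohomology classes, and then invoke the coboundary hypothesis on that $X_i$. The whole argument turns on the observation that every component of $A$ is in fact a component of some $A_i$.

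First I would verify this key geometric fact: if $C$ is a component of $A$, then by hypothesis $C\subset A_i$ for some $i$, so $C$ lies in a unique component $A_i^{(j)}$ of $A_i$. But $A_i^{(j)}$ is itself a connected subset of $A$ containing $C$, and $C$ is maximal connected in $A$, so $A_i^{(j)}=C$. Consequently each component of $A$ is a closed connected $\mathcal{R}$-orientable $(m-1)$-manifold, and since $A$ is the disjoint union of its components, $A$ is a closed $\mathcal{R}$-orientable $(m-1)$-manifold.

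For the main assertion I would argue by contradiction: suppose some $\phi\in L^\mathcal{R}(A)$ lies in the image of $\iota(X,A)^*$, say $\phi = \iota(X,A)^*(\omega)$ for some $\omega \in \tilde{H}^{m-1}(X)$. By the definition of $L^\mathcal{R}(A)$, $\phi$ is the image (trivially when $m>1$, via the projection $H^0\to\tilde{H}^0$ when $m=1$) of a class $\phi' \in H^{m-1}(A) \cong \oplus_\alpha H^{m-1}(C_\alpha)$ that is an $\mathcal{R}$-module generator on exactly one component $C$ of $A$ and zero on the others. Pick $i$ with $C\subset A_i$; by the first step, $C = A_i^{(j_0)}$ for some $j_0$. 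Commutativity of the square
\begin{align*}
\xymatrix{\tilde{H}^{m-1}(X) \ar[r]^{\iota(X,X_i)^*} \ar[d]_{\iota(X,A)^*} & \tilde{H}^{m-1}(X_i) \ar[d]^{\iota(X_i,A_i)^*} \\ \tilde{H}^{m-1}(A) \ar[r]^{\iota(A,A_i)^*} & \tilde{H}^{m-1}(A_i)}
\end{align*}
then yields $\iota(X_i, A_i)^*\bigl(\iota(X,X_i)^*(\omega)\bigr) = \iota(A,A_i)^*(\phi) =: \psi_i$. I would then show $\psi_i\in L^\mathcal{R}(A_i)$: by step 1, each component of $A_i$ is a component of $A$, so the restriction $H^{m-1}(A)\to H^{m-1}(A_i)$ is the projection onto the summands indexed by those components of $A$ lying in $A_i$; therefore $\iota(A,A_i)^*(\phi')$ is a generator on $A_i^{(j_0)}$ embedded by zero on the other components. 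Passing back to reduced cohomology (trivial when $m>1$; naturality of $H^0\to\tilde{H}^0$ when $m=1$) gives $\psi_i\in L^\mathcal{R}(A_i)$. But $\psi_i$ lies in the image of $\iota(X_i,A_i)^*$, contradicting the coboundary hypothesis on $X_i$.

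The proof is essentially a diagram chase once the component identification of step 1 is in hand; that identification is the only substantive geometric input, and after it I anticipate no serious obstacle.
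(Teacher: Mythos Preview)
Your argument is correct and mirrors the paper's own (very terse) proof: both establish that the components of $A$ coincide with components of the $A_i$'s and then observe that restricting an element of $L^\mathcal{R}(A)$ to the relevant $A_i$ lands in $L^\mathcal{R}(A_i)$; you simply spell out the diagram chase that the paper leaves as ``the result follows.'' One minor slip worth flagging: step~1 shows that a component of $A$ contained in $A_i$ is a component of $A_i$, not the converse (``each component of $A_i$ is a component of $A$'') you invoke later---but your conclusion $\psi_i\in L^\mathcal{R}(A_i)$ follows regardless, since any component of $A_i$ other than $C=A_i^{(j_0)}$ is disjoint from $C$ and hence $\phi'$ restricts to zero there.
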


		\begin{proof}
			The equivalence of the assumptions in the second sentence is a consequence of Brouwer's invariance of domain theorem \cite{brouwer}. Then \( A \), being the disjoint union of its connected components, is a \( \mathcal{R} \)-orientable closed manifold. Moreover, every component of \( A_i \) is a component of \( A \), and every component of \( A \) is a component of \( A_i \) for some \( i \). The result follows.
		\end{proof}
		
		\addcontentsline{toc}{section}{References} 
		\bibliography{Jennybib.bib, mybib.bib}{}

\providecommand{\bysame}{\leavevmode\hbox to3em{\hrulefill}\thinspace}
\providecommand{\MR}{\relax\ifhmode\unskip\space\fi MR }
\providecommand{\MRhref}[2]{%
  \href{http://www.ams.org/mathscinet-getitem?mr=#1}{#2}
}
\providecommand{\href}[2]{#2}
\begin{thebibliography}{Alm68}

\bibitem[Alm68]{almgrenannals}
Frederick Almgren, \emph{Existence and regularity almost everywhere of
  solutions to elliptic variational problems among surfaces of varying
  topological type and singularity structure}, Annals of Mathematics
  \textbf{87} (1968), no.~2, 321--391.

\bibitem[Bro12]{brouwer}
L.E.J. Brouwer, \emph{Beweis der invarianz des n-dimensionalen gebiets},
  Mathematische Annalen \textbf{71} (1912), no.~305-313.

\bibitem[Har12]{plateau10}
Jenny Harrison, \emph{Soap film solutions of {Plateau's} problem}, Journal of
  Geometric Analysis (2012).

\bibitem[Hat01]{hatcher}
Allen Hatcher, \emph{Algebraic topology}, Cambridge University Press, 2001.

\bibitem[HP13]{hpplateau}
Jenny Harrison and Harrison Pugh, \emph{Existence and soap film regularity of
  solutions to {Plateau's} problem}, http://arxiv.org/abs/1310.0508, October
  2013.

\bibitem[HW48]{hurewicz}
Witold Hurewicz and Henry Wallman, \emph{Dimension theory}, Princeton
  University Press, 1948.

\bibitem[Rei60]{reifenberg}
Ernst~Robert Reifenberg, \emph{Solution of the {Plateau} problem for
  m-dimensional surfaces of varying topological type}, Acta Mathematica
  \textbf{80} (1960), no.~2, 1--14.

\end{thebibliography}
		\bibliographystyle{amsalpha} 

		\end{document}